\newcommand{\citet}{\cite}
\newtheorem{remark}{Remark}
\newtheorem{theorem}{Theorem}[section]
\newtheorem{lemma}[theorem]{Lemma}
\newtheorem{corollary}[theorem]{Corollary}
\newcommand{\lb}{\left(}
\newcommand{\rb}{\right)}
\newcommand{\eps}{\epsilon}
\newcommand{\td}{\tilde}
\newcommand{\E}{\mathbb{E}}
\newcommand{\R}{\mathbb{R}}
\newcommand{\I}{\mathcal{I}}
\newcommand{\la}{\langle}
\newcommand{\ra}{\rangle}
\renewcommand{\H}{\mathcal{H}}
\newcommand{\sI}{\td{\mathcal{I}}}
\newcommand{\comp}{C_{\mathrm{comp}}}
\newcommand{\xj}{x^{(j)}}
\newcommand{\Lj}{L^{(j)}}
\newcommand{\nuj}{\nu^{(j)}}
\newcommand{\etaj}{\eta_{j}}
\newcommand{\Bj}{B_{j}}
\newcommand{\bj}{b_{j}}
\newcommand{\ej}{e_{j}}
\newcommand{\Ej}{\E_{j}}
\newcommand{\Nj}{N_{j}}
\newcommand{\Mj}{M^{(j)}}
\newcommand{\thetaj}{\theta_{j}}
\def \endprf{\hfill {\vrule height6pt width6pt depth0pt}\medskip}
\newenvironment{proof}{\noindent {\bf Proof} }{\endprf\par}
\DeclareMathOperator*{\argmin}{arg\,min}
\title{Nonconvex Finite-Sum Optimization Via SCSG Methods}
\author{
  Lihua Lei \\
  UC Berkeley\\
  \texttt{lihua.lei@berkeley.edu} \\
  \And
  Cheng Ju \\
  UC Berkeley\\
  \texttt{cju@berkeley.edu} \\
  \And
  Jianbo Chen\\
  UC Berkeley\\
  \texttt{jianbochen@berkeley.edu} \\
  \And
  Michael I. Jordan \\
  UC Berkeley\\
  \texttt{jordan@stat.berkeley.edu} \\
  %% \and
  %% Coauthor \\
  %% Affiliation \\
  %% Address \\
  %% \texttt{email} \\
  %% \And
  %% Coauthor \\
  %% Affiliation \\
  %% Address \\
  %% \texttt{email} \\
  %% \And
  %% Coauthor \\
  %% Affiliation \\
  %% Address \\
  %% \texttt{email} \\
}
\begin{document}
% \nipsfinalcopy is no longer used
\maketitle

\begin{abstract}
We develop a class of algorithms, as variants of the \emph{stochastically controlled stochastic gradient} (SCSG) methods \cite{SCSG}, for the smooth non-convex finite-sum optimization problem. Assuming the smoothness of each component, the complexity of SCSG to reach a stationary point with $\E \|\nabla f(x)\|^{2}\le \eps$ is $O\lb\min\{\eps^{-5/3}, \eps^{-1}n^{2/3}\}\rb$, which strictly outperforms the stochastic gradient descent. Moreover, SCSG is never worse than the state-of-the-art methods based on variance reduction and it significantly outperforms them when the target accuracy is low. A similar acceleration is also achieved when the functions satisfy the Polyak-Lojasiewicz condition. Empirical experiments demonstrate that SCSG outperforms stochastic gradient methods on training multi-layers neural networks in terms of both training and validation loss.
\end{abstract}

\section{Introduction}\label{sec:intro}
We study smooth non-convex finite-sum optimization problems of the form
\begin{equation}
  \label{eq:obj}
  \min_{x\in \R^{d}}f(x) = \frac{1}{n}\sum_{i=1}^{n}f_{i}(x)
\end{equation}
where each component $f_{i}(x)$ is possibly non-convex with a Lipschitz gradient. This generic form captures numerous statistical learning problems, ranging from generalized linear models \cite{GLM} to deep neural networks \cite{lecun15}.
 
% The case where $f_{i}(x)$ are all convex has been intensively studied in recent years using stochastic first-order methods. This impactful line of researches is inspired by the enormous success of stochastic gradient descent (SGD), dating back to the pioneer work by \citet{robbins51}, which is particularly effective for large-scale problems since it only requires accessing a small subset of data for each update. Various efficient variants are proposed to accelerate SGD by exploiting the finite-sum structure; see e.g. \citet{SAG, SAGA, SVRG, SDCA, Catalyst, Finito, katyusha}. Among others, a popular strategy is variance reduction which intelligently blends the full gradients and stochastic gradients to guarantee a sequence of update with vanishing variance \citet{Mixedgrad, S2GD, SVRG}. This type of modification is shown to improve the complexity for both strongly convex case \cite{SVRG} and non-strongly convex case \cite{reddi16svrg} and requires much less storage cost than its competitors. 

In contrast to the convex case, the non-convex case is comparatively under-studied. Early work focused on the asymptotic performance of algorithms~\cite{gaivoronski94, bertsekas97, tseng98}, with non-asymptotic complexity bounds emerging more recently \citet{nesterov04}. In recent years, complexity results have been derived for both gradient methods \citet{ghadimi16acc, agarwal16, carmon16, carmon17} and stochastic gradient methods \citet{ghadimi13, ghadimi16acc, Zhu15, Zhu16, reddi16svrg, reddi16saga, natasha}. % and Frank-Wolfe methods (\citet{reddi16fw})
 Unlike in the convex case, in the non-convex case one can not expect a gradient-based algorithm to converge to the global minimum if only smoothness is assumed. As a consequence, instead of measuring function-value suboptimality $\E f(x) - \inf_{x}f(x)$ as in the convex case, convergence is generally measured in terms of the squared norm of the gradient; i.e., $\E \|\nabla f(x)\|^{2}$.  We summarize the best achievable rates \footnote{It is also common to use $\E \|\nabla f(x)\|$ to measure convergence; see, e.g. \citet{agarwal16, carmon16, carmon17, natasha}. Our results can be readily transferred to this alternative measure by using Cauchy-Schwartz inequality, $\E \|\nabla f(x)\| \le \sqrt{\E \|\nabla f(x)\|^{2}}$,  although not vice versa. The rates under this alternative can be made comparable to ours by replacing $\eps$ by $\sqrt{\eps}$.} in Table \ref{tab:summary}. We also list the rates for Polyak-Lojasiewicz (P-L) functions, which will be defined in Section \ref{sec:notation}. The accuracy for minimizing P-L functions is measured by $\E f(x) - \inf_{x}f(x)$. % and compare them to the convex case. Although the measures are different, the co-coerciveness argument \cite{cocoercive} ensures that $\|\nabla f(x)\|^{2}\le 2L(f(x) - f(x^{*}))$ and hence the complexity are comparable at least for the gradient suboptimality.

\begin{table}
  \centering
  \caption{Computation complexity of gradient methods and stochastic gradient methods for the finite-sum non-convex optimization problem \eqref{eq:obj}. The second and third columns summarize the rates in the smooth and P-L cases respectively. $\mu$ is the P-L constant and $\H^{*}$ is the variance of a stochastic gradient. These quantities are defined in Section \ref{sec:notation}.  The final column gives additional required assumptions beyond smoothness or the P-L condition. The symbol $\wedge$ denotes a minimum and $\td{O}(\cdot)$ is the usual Landau big-O notation with logarithmic terms hidden.}\label{tab:summary}
  \begin{tabular}{c|c|c|l}
    \toprule
     & Smooth & Polyak-Lojasiewicz & additional cond.\\
    \midrule
    \multicolumn{2}{l}{Gradient Methods} \\
    \midrule
    GD & $O\lb\frac{n}{\eps}\rb$ \cite{nesterov04, ghadimi16acc}& $\td{O}\lb\frac{n}{\mu}\rb$ \citet{polyak63, karimi16} & -\\
    Best achievable & $\td{O}\lb \frac{n}{\eps^{7/8}}\rb$ \cite{carmon17} & - & smooth gradient \\
    & $\td{O}\lb \frac{n}{\eps^{5/6}}\rb$ \cite{carmon17} & - & smooth Hessian\\
    \midrule
    \multicolumn{2}{l}{Stochastic Gradient Methods} \\
    \midrule
   SGD & $O\lb\frac{1}{\eps^{2}}\rb$ \cite{nesterov04, reddi16svrg} & $O\lb\frac{1}{\mu^{2}\eps}\rb$ \cite{karimi16} & $\H^{*} = O(1)$ \\
   Best achievable  & $O\lb n + \frac{n^{2/3}}{\eps}\rb$\cite{reddi16svrg, reddi16saga} & $\td{O}\lb n + \frac{n^{2/3}}{\mu}\rb$\cite{reddi16svrg, reddi16saga} & - \\
   {\color{red}SCSG} & ${\color{red}\td{O}\lb\frac{1}{\eps^{5/3}}\wedge \frac{n^{2/3}}{\eps}\rb}$ & ${\color{red}\td{O}\lb(\frac{1}{\mu\eps}\wedge n) + \frac{1}{\mu}(\frac{1}{\mu\eps}\wedge n)^{2/3}\rb}$ & $\H^{*} = O(1)$ \\
    \bottomrule
  \end{tabular}
\end{table}

As in the convex case, gradient methods have better dependence on $\eps$ in the non-convex case but worse dependence on $n$.  This is due to the requirement of computing a full gradient. Comparing the complexity of SGD and the best achievable rate for stochastic gradient methods, achieved via variance-reduction methods, the dependence on $\eps$ is significantly improved in the latter case. However, unless $\eps <\!\!< n^{-1/2}$, SGD has similar or even better theoretical complexity than gradient methods and existing variance-reduction methods. In practice, it is often the case that $n$ is very large ($10^5\sim 10^9$) while the target accuracy is moderate ($10^{-1}\sim 10^{-3}$). In this case, SGD has a meaningful advantage over other methods, deriving from the fact that it does not require a full gradient computation.  This motivates the following research question: Is there an algorithm that

\begin{itemize}
\item achieves/beats the theoretical complexity of SGD in the regime of modest target accuracy;
\item and achieves/beats the theoretical complexity of existing variance-reduction methods in the regime of high target accuracy?
\end{itemize}

The question has been partially answered in the convex case by \cite{SCSG} in their formulation of the \emph{stochastically controlled stochastic gradient} (SCSG) methods. When the target accuracy is low, SCSG has the same $O\lb\eps^{-2}\rb$ rate as SGD but with a much smaller data-dependent constant factor (which does not even require bounded gradients). When the target accuracy is high, SCSG achieves the same rate as the best non-accelerated methods, $O(\frac{n}{\eps})$. Despite the gap between this and the optimal rate, SCSG is the first known algorithm that provably achieves the desired performance in both regimes.% Later \cite{SCSG+} propose a variant SCSG+ that adopts a increasing batch sizes and achieves the surprising computation complexity as listed in the third row of Table \ref{tab:summary}. Comparing SCSG+ to other algorithms, it is clear that 1) SCSG outperforms SGD in all regimes in terms of $\eps$-dependence (given $\eps = o(1)$); 2) SCSG outperforms recent methods with best achievable complexity provided $\eps >\!\!> n^{-1}$, which is a reasonable regime in various large-scale applications. The experiments on real datasets further confirm that the advantages are both theoretical and empirical (\cite{SCSG+}). 

In this paper, we show how to generalize SCSG to the non-convex setting and, surprisingly, provide a completely affirmative answer to the question raised above. Even though we only assume smoothness of each component, we show that SCSG is always $O\lb\eps^{-1/3}\rb$ faster than SGD and is never worse than recently developed variance-reduction methods. % \footnote{This can be seen by considering $\eps \ge \frac{1}{n}$ and $\eps < \frac{1}{n}$. In the former case, the rate of SCSG is $\td{O}\lb\eps^{-5/3}\rb = \td{O}(\frac{n^{2/3}}{\eps})$; in the latter case, the rate of SCSG is $\td{O}\lb\rb = \td{O}\lb\frac{n^{2/3}}{\eps}\rb$}
% In the extreme cases where $\eps = O(1)$ (ultra-low-accuracy regime)
When $\eps >\!\!> \frac{1}{n}$, SCSG is at least $O((\eps n)^{2/3})$ faster than the best variance-reduction algorithm. Comparing with gradient methods, SCSG has a better convergence rate provided $\eps >\!\!> n^{-6/5}$, which is the common setting in practice. Interestingly, there is a parallel to recent advances in gradient methods; \citet{carmon17} improved the classical $O(\eps^{-1})$ rate of gradient descent to $O(\eps^{-5/6})$; this parallels the improvement of SCSG over SGD from $O(\eps^{-2})$ to $O(\eps^{-5/3})$. % we design two versions of SCSG: the first version achieves the best rate among the SCSG class but requires decreasing stepsizes while the second version has a slightly worse rate but allows for a constant stepsize. As shown in the third row of Table \ref{tab:summary}, the complexity is always strictly better than SGD (given $\eps = o(1)$). Via some calculation we also show that it is always no worse than best achievable rates given by SVRG or SAGA. In particular, in the realistic regimes where $\eps \sim n^{-\frac{1}{2}}$ and $\eps \sim n^{-1}$, SCSG is $O(n^{7/15})$ and $O(n^{4/15})$ faster than best existing algorithms. A similar calculation shows that the second version of SCSG is also never worse than the best existing algorithms except with less acceleration. 

Beyond the theoretical advantages of SCSG, we also show that SCSG yields good empirical performance for the training of multi-layer neural networks. It is worth emphasizing that the mechanism by which SCSG achieves acceleration (variance reduction) is qualitatively different from other speed-up techniques, including momentum \cite{momentum} and adaptive stepsizes \cite{adam}. It will be of interest in future work to explore combinations of these various approaches in the training of deep neural networks.

The rest of paper is organized as follows: In Section \ref{sec:notation} we discuss our notation and assumptions and we state the basic SCSG algorithm. We present the theoretical convergence analysis in Section \ref{sec:ana}. Experimental results are presented in Section \ref{sec:experiments}. All the technical proofs are relegated to the Appendices. Our code is available at \url{https://github.com/Jianbo-Lab/SCSG}.

\section{Notation, Assumptions and Algorithm}\label{sec:notation}

We use $\|\cdot\|$ to denote the Euclidean norm and write $\min\{a, b\}$ as $a\wedge b$ for brevity throughout the paper. The notation $\td{O}$, which hides logarithmic terms, will only be used to maximize readibility in our presentation but will not be used in the formal analysis. 

We define computation cost using the IFO framework of \citet{Agarwal14} which assumes that sampling an index $i$ and accessing the pair $(\nabla f_{i}(x), f_{i}(x))$ incur a unit of cost. For brevity, we write $\nabla f_{\I}(x)$ for $\frac{1}{|\I|}\sum_{i\in \I}\nabla f_{i}(x)$. Note that calculating $\nabla f_{\I}(x)$ incurs $|\I|$ units of computational cost. $x$ is called an $\eps$-accurate solution iff $\E \|\nabla f(x)\|^{2}\le \eps$. The minimum IFO complexity to reach an $\eps$-accurate solution is denoted by $\comp(\eps)$.

Recall that a random variable $N$ has a geometric distribution, $N\sim \mathrm{Geom}(\gamma)$, if $N$ is supported on the non-negative integers \footnote{Here we allow $N$ to be zero to facilitate the analysis.} with
\[P(N = k) = \gamma^{k}(1 - \gamma),\quad \forall k = 0, 1, \ldots\]
An elementary calculation shows that 
\begin{equation}
  \label{eq:Egeom}
  \E_{N\sim \mathrm{Geom}(\gamma)} = \frac{\gamma}{1 - \gamma}.
\end{equation}

To formulate our complexity bounds, we define 
\[f^{*} = \inf_{x}f(x),\quad  \Delta_{f} = f(\td{x}_{0}) - f^{*}.\]
Further we define $\H^{*}$ as an upper bound on the variance of the stochastic gradients:
\begin{equation}\label{eq:Hstar}
\H^{*} = \sup_{x}\frac{1}{n}\sum_{i=1}^{n}\|\nabla f_{i}(x) - \nabla f(x)\|^{2}.
\end{equation}

Assumption \textbf{A}1 on the smoothness of individual functions will be made throughout the paper. 
\begin{enumerate}[\textbf{A}1]
\item $f_{i}$ is differentiable with
\[\|\nabla f_{i}(x) - \nabla f_{i}(y)\|\le L\|x - y\|,\]
for some $L < \infty$ and for all $i\in \{1, \ldots, n\}$.
\end{enumerate}
As a direct consequence of assumption \textbf{A}1, it holds for any $x, y\in \R^{d}$ that
\begin{equation}\label{eq:A1}
-\frac{L}{2}\|x - y\|^{2}\le f_{i}(x) - f_{i}(y) - \la\nabla f_{i}(y), x - y\ra \le \frac{L}{2}\|x - y\|^{2}.
\end{equation}
In this paper, we also consider the Polyak-Lojasiewicz (P-L) condition \cite{polyak63}. It is weaker than strong convexity as well as other popular conditions that appear in the optimization literature; see \cite{karimi16} for an extensive discussion. 
\begin{enumerate}[\textbf{A}2]
\item $f(x)$ satisfies the P-L condition with $\mu > 0$ if
\[\|\nabla f(x)\|^{2}\ge 2\mu(f(x) - f(x^{*}))\]
where $x^{*}$ is the global minimum of $f$.
\end{enumerate}

\subsection{Generic form of SCSG methods}

The algorithm we propose in this paper is similar to that of \cite{PSVRG} except (critically) the number of inner loops is a geometric random variable.  This is an essential component in the analysis of SCSG, and, as we will show below, it is key in allowing us to extend the complexity analysis for SCSG to the non-convex case. Moreover, that algorithm that we present here employs a mini-batch procedure in the inner loop and outputs a random sample instead of an average of the iterates. The pseudo-code is shown in Algorithm \ref{algo:SCSG}.

\begin{algorithm}[htp]
\caption{(Mini-Batch) Stochastically Controlled Stochastic Gradient (SCSG) method for smooth non-convex finite-sum objectives}
\label{algo:SCSG}
\textbf{Inputs: } Number of stages $T$, initial iterate $\td{x}_{0}$, stepsizes $(\eta_{j})_{j=1}^{T}$, batch sizes $(B_{j})_{j=1}^{T}$, mini-batch sizes $(b_{j})_{j=1}^{T}$.

\textbf{Procedure}
\begin{algorithmic}[1]
  \For{$j = 1, 2, \cdots, T$}
  \State Uniformly sample a batch $\I_{j}\subset\{1, \cdots, n\}$ with $|\I_{j}| = B_{j}$;
  \State $g_{j}\gets \nabla f_{\I_{j}}(\td{x}_{j-1})$;
  \State $\xj_{0}\gets \td{x}_{j-1}$;
  \State Generate $N_{j}\sim \mathrm{Geom}\lb B_{j}/(B_{j} + b_{j})\rb$;
  % \If{$\mu = 0$}
  % \State $\gamma_{j}\gets \frac{\td{B}_{j}}{\td{B}_{j} + 1}$;
  % \State Generate $N_{j} > 0$ with $P(N_{j} = k)\propto \gamma_{j}^{k}$; 
  % \Else
  % \State $\gamma_{j}\gets \frac{\td{B}_{j}}{\td{B}_{j} + 1}$;
  % \State Generate $N_{j} > 0$ with $P(N_{j} = k)\propto \gamma_{j}^{k}$;   
  % \EndIf
  \For{$k = 1, 2, \cdots, N_{j}$}
  \State Randomly pick $\sI_{k-1}\subset [n]$ with $|\sI_{k-1}| = b_{j}$;
  \State $\nuj_{k - 1} \gets \nabla f_{\sI_{k-1}}(\xj_{k-1}) - \nabla f_{\sI_{k-1}}(\xj_{0}) + g_{j}$;
  \State $\xj_{k}\gets \xj_{k-1} - \eta_{j}\nuj_{k - 1}$;
  \EndFor
  \State $\td{x}_{j}\gets \xj_{N_{j}}$;
  \EndFor
\end{algorithmic}
\textbf{Output: } (Smooth case) Sample $\td{x}_{T}^{*}$ from $(\td{x}_{j})_{j=1}^{T}$ with $P(\td{x}_{T}^{*} = \td{x}_{j})\propto \eta_{j}B_{j}/\bj$; (P-L case) $\td{x}_{T}$.
\end{algorithm}

As seen in the pseudo-code, the SCSG method consists of multiple epochs. In the $j$-th epoch, a mini-batch of size $B_{j}$ is drawn uniformly from the data and a sequence of mini-batch SVRG-type updates are implemented, with the total number of updates being randomly generated from a geometric distribution, with mean equal to the batch size. Finally it outputs a random sample from $\{\td{x}_{j}\}_{j=1}^{T}$. This is the standard way, proposed by \cite{Nemirovsky09}, as opposed to computing $\argmin_{j\le T} \|\nabla f(\td{x}_{j})\|$ which requires additional overhead. By \eqref{eq:Egeom}, the average total cost is 
\begin{equation}\label{eq:avgcost}
\sum_{j=1}^{T}(B_{j} + b_{j}\cdot\E N_{j}) = \sum_{i=1}^{T}(B_{j} + b_{j}\cdot \frac{B_{j}}{b_{j}}) = 2\sum_{j=1}^{T}B_{j}.
\end{equation}
Define $T(\eps)$ as the minimum number of epochs such that all outputs afterwards are $\eps$-accurate solutions, i.e.
\[T(\eps) = \min\{T: \E\|\nabla f(\td{x}_{T'}^{*})\|^{2}\le \eps \mbox{ for all }T' \ge T\}.\]
Recall the definition of $\comp(\eps)$ at the beginning of this section, the average IFO complexity to reach an $\eps$-accurate solution is 
\[\E \comp(\eps) \le 2\sum_{j=1}^{T(\eps)}B_{j}.\]

\subsection{Parameter settings}
The generic form (Algorithm \ref{algo:SCSG}) allows for flexibility in both stepsize, $\etaj$, and batch/mini-batch size, $(\Bj, \bj)$. In order to minimize the amount of tuning needed in practice, we provide several default settings which have theoretical support. The settings and the corresponding complexity results are summarized in Table \ref{tab:setup}. Note that all settings fix $\bj = 1$ since this yields the best rate as will be shown in Section \ref{sec:ana}. However, in practice a reasonably large mini-batch size $\bj$ might be favorable due to the acceleration that could be achieved by vectorization; see Section \ref{sec:experiments} for more discussions on this point.

\begin{table}
  \centering
  \caption{Parameter settings analyzed in this paper.}\label{tab:setup}
  \begin{tabular}{c|c|c|c|c|c}
    \toprule
    & $\etaj$ & $\Bj$ & $\bj$ & Type of Objectives & $\E \comp(\eps)$\\
    \midrule
Version 1 & $\frac{1}{6LB^{2/3}}$ & $O\lb\frac{1}{\eps}\wedge n\rb$ & $1$ & Smooth & $O\lb\frac{1}{\eps^{5/3}}\wedge \frac{n^{2/3}}{\eps}\rb$\\
Version 2 & $\frac{1}{6L\Bj^{2/3}}$ & $j^{\frac{3}{2}}\wedge n$ & $1$ & Smooth & $\td{O}\lb\frac{1}{\eps^{5/3}}\wedge \frac{n^{2/3}}{\eps}\rb$\\
Version 3 & $\frac{1}{6L\Bj^{2/3}}$ & $O\lb\frac{1}{\mu\eps}\wedge n\rb$ & $1$ & Polyak-Lojasiewicz & $\td{O}\lb(\frac{1}{\mu\eps}\wedge n) + \frac{1}{\mu}(\frac{1}{\mu\eps}\wedge n)^{2/3}\rb$\\ 
    \bottomrule
  \end{tabular}
\end{table}

\section{Convergence Analysis}\label{sec:ana}

\subsection{One-epoch analysis}
First we present the analysis for a single epoch. Given $j$, we define
\begin{equation}
  \label{eq:ej}
  \ej = \nabla f_{\I_{j}}(\td{x}_{j-1}) - \nabla f(\td{x}_{j-1}).
\end{equation}

As shown in \cite{PSVRG}, the gradient update $\nuj_{k}$ is a biased estimate of the gradient $\nabla f(\xj_{k})$ conditioning on the current random index $i_{k}$. Specifically, within the $j$-th epoch,
\[\E_{\sI_{k}}\nuj_{k} = \nabla f(\xj_{k}) + \nabla f_{\I_{j}}(\xj_{0}) - \nabla f(\xj_{0}) = \nabla f(\xj_{k}) + \ej.\]
This reveals the basic qualitative difference between SVRG and SCSG. Most of the novelty in our analysis lies in dealing with the extra term $\ej$. Unlike \cite{PSVRG}, we do not assume $\|\xj_{k} - x^{*}\|$ to be bounded since this is invalid in unconstrained problems, even in convex cases. 

By careful analysis of primal and dual gaps~\cite{zhu14}, we find that the stepsize $\etaj$ should scale as $(\Bj / \bj)^{-\frac{2}{3}}$. Then same phenomenon has also been observed in \citet{reddi16svrg, reddi16saga, Zhu16} when $\bj = 1$ and $\Bj = n$. 

\begin{theorem}\label{thm:one_epoch}
Let $\etaj L = \gamma(\Bj / \bj) ^{-\frac{2}{3}}$. Suppose $\gamma \le \frac{1}{3}$ and $\Bj\ge 8\bj$ for all $j$, then under Assumption \textbf{A}1, 
\begin{equation}\label{eq:one_epoch}
\E \|\nabla f(\td{x}_{j})\|^{2}\le \frac{5L}{\gamma}\cdot \lb\frac{\bj}{\Bj}\rb^{\frac{1}{3}}\E (f(\td{x}_{j-1}) - f(\td{x}_{j})) + \frac{6I(\Bj < n)}{\Bj}\cdot\H^{*}.
\end{equation}
\end{theorem}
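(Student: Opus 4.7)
The plan is to chain three ingredients: a one-step descent inequality from smoothness (assumption \textbf{A}1); the identity $\E D_{\Nj} - D_0 = (\Bj/\bj)\E(D_{\Nj+1} - D_{\Nj})$, satisfied by any deterministic sequence $\{D_k\}$ when $\Nj \sim \mathrm{Geom}(\Bj/(\Bj+\bj))$; and a variance bound on $\ej$. Conditioning on $(\td{x}_{j-1}, \Ij, \xj_k)$, the update has bias $\E_{\sI_k}\nuj_k - \nabla f(\xj_k) = \ej$ and conditional variance $\E_{\sI_k}\|\nuj_k - \E_{\sI_k}\nuj_k\|^2 \le L^2\|\xj_k - \xj_0\|^2/\bj$, the latter a direct consequence of \textbf{A}1 applied to $\nabla f_i(\xj_k) - \nabla f_i(\xj_0)$. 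Expanding $f(\xj_{k+1}) \le f(\xj_k) - \etaj\la\nabla f(\xj_k), \nuj_k\ra + \frac{L\etaj^2}{2}\|\nuj_k\|^2$ and averaging over $\sI_k$, I would rewrite the cross term via the polarization identity $-\la a, a+b\ra = -\tfrac{1}{2}\|a\|^2 - \tfrac{1}{2}\|a+b\|^2 + \tfrac{1}{2}\|b\|^2$; the resulting $-\tfrac{\etaj}{2}\|\nabla f(\xj_k)+\ej\|^2$ term absorbs the $\tfrac{L\etaj^2}{2}\|\nabla f(\xj_k)+\ej\|^2$ contribution from the variance expansion whenever $L\etaj \le 1$ (guaranteed because $\gamma \le 1/3$ and $\Bj \ge 8\bj$ force $L\etaj \le 1/12$), leaving
\begin{equation*}
\E_{\sI_k} f(\xj_{k+1}) - f(\xj_k) \le -\frac{\etaj}{2}\|\nabla f(\xj_k)\|^2 + \frac{\etaj}{2}\|\ej\|^2 + \frac{L^3\etaj^2}{2\bj}\|\xj_k - \xj_0\|^2.
\end{equation*}
Plugging $D_k = \E f(\xj_k)$ into the geometric identity and using the per-step bound at $k = \Nj$ yields
\begin{equation*}
\frac{\etaj}{2}\E\|\nabla f(\td{x}_j)\|^2 \le \frac{\bj}{\Bj}\E(f(\td{x}_{j-1}) - f(\td{x}_j)) + \frac{\etaj}{2}\E\|\ej\|^2 + \frac{L^3\etaj^2}{2\bj}\E\|\xj_{\Nj} - \xj_0\|^2.
\end{equation*}

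The delicate step is bounding $\E\|\xj_{\Nj} - \xj_0\|^2$. Reapplying the geometric identity with $D_k = \|\xj_k - \xj_0\|^2$ (so $D_0 = 0$), expanding $D_{k+1} - D_k = \etaj^2\|\nuj_k\|^2 - 2\etaj\la\nuj_k, \xj_k - \xj_0\ra$, averaging over $\sI_k$, and handling the cross term by Young's inequality produces a linear recursion in $\E\|\xj_k - \xj_0\|^2$; the self-amplification factor scales like $(\Bj/\bj)(\rho + L^2\etaj^2/\bj)$ for the Young parameter $\rho$, and the hypothesis $\Bj \ge 8\bj$ is precisely what keeps it strictly below one so the recursion inverts. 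Substituting $\etaj L = \gamma(\Bj/\bj)^{-2/3}$, the outcome bounds $(L^3\etaj/\bj)\E\|\xj_{\Nj} - \xj_0\|^2$ by $(3L/\gamma)(\bj/\Bj)^{1/3}\E(f(\td{x}_{j-1}) - f(\td{x}_j))$ plus a multiple of $I(\Bj<n)\H^*/\Bj$. Combining with $\E\|\ej\|^2 \le I(\Bj<n)\H^*/\Bj$ --- immediate from uniform sampling of $\Ij$ and the definition \eqref{eq:Hstar} of $\H^*$ --- and noting that the stepsize turns $2\bj/(\etaj\Bj)$ into $(2L/\gamma)(\bj/\Bj)^{1/3}$ recovers the advertised constants $5L/\gamma$ and $6$.

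The principal obstacle is the displacement bound: $\|\xj_k - \xj_0\|^2$ appears inside the very conditional variance used to control its one-step growth, creating a self-referential recursion. Classical nonconvex SVRG analyses \cite{reddi16svrg} sidestep this via a hand-tuned Lyapunov $f(\xj_k) + c_k\|\xj_k - \xj_0\|^2$ with an explicit sequence $\{c_k\}$; here the randomness of $\Nj$ lets a single application of the telescoping identity replace the Lyapunov construction, but only at the cost of the size condition $\Bj \ge 8\bj$, which supplies just enough slack for the recursion to close with the clean constants appearing in the statement.
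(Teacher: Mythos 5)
Your proposal is correct in outline and would yield the stated bound, but it takes a genuinely different algebraic route from the paper. The paper separates the argument into a primal inequality (Lemma~\ref{lem:primal}) keeping the term $\la\ej, \nabla f(\td{x}_{j})\ra$, a dual inequality (Lemma~\ref{lem:dual}) keeping the terms $\la\ej, \td{x}_{j}-\td{x}_{j-1}\ra$ and $\la\nabla f(\td{x}_{j}), \td{x}_{j}-\td{x}_{j-1}\ra$, and the exact identity (Lemma~\ref{lem:Ij}) $\bj\E\la\ej, \td{x}_{j}-\td{x}_{j-1}\ra = -\etaj\Bj\E\la\ej, \nabla f(\td{x}_{j})\ra - \etaj\Bj\E\|\ej\|^{2}$. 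When the primal (times $2$) is added to the dual (times $\bj/(\etaj\Bj)$), the two $\ej$ cross terms cancel exactly into $-2\etaj\Bj\E\|\ej\|^{2}$, and only a single Young-type inequality is used, on $-\la\nabla f(\td{x}_{j}), \td{x}_{j}-\td{x}_{j-1}\ra$, at the very end. You instead absorb the $\ej$ noise at the per-step level: the polarization identity to handle $\la\nabla f(\xj_{k}), \ej\ra$ in the descent step, and a Young step on $\la\nuj_{k}, \xj_{k}-\xj_{0}\ra$ in the displacement argument, so you dispense with the Lemma~\ref{lem:Ij} cancellation entirely. This is more direct and arguably more elementary; the price is that the Young step is lossy where the paper's identity is exact, so intermediate constants come out looser (though still inside the advertised $5L/\gamma$ and $6$), and the tightness hinges on a good choice of the Young parameter $\rho$.

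Two minor points of hygiene in the sketch. First, your displacement bound actually delivers $\E\|\xj_{\Nj}-\xj_{0}\|^{2}$ controlled by $\E\|\nabla f(\td{x}_{j})\|^{2}$ and $\E\|\ej\|^{2}$, not by the function decrease; the attribution of $(3L/\gamma)(\bj/\Bj)^{1/3}\E(f(\td{x}_{j-1})-f(\td{x}_{j}))$ to the displacement is the \emph{net} effect after substituting back into the primal and re-absorbing the resulting self-referential $\E\|\nabla f(\td{x}_{j})\|^{2}$ coefficient, which is what inflates $2L/\gamma$ to at most $5L/\gamma$. Second, the geometric telescoping identity (Lemma~\ref{lem:geom}) is for a fixed sequence, so it must be applied conditionally on $(\td{x}_{j-1}, \Ij)$ --- note $\ej$ is a function of $\Ij$ --- and then averaged; the paper's $\Ej$ versus $\E_{\Nj}$ bookkeeping makes this explicit, whereas your shorthand ``$D_{k}=\E f(\xj_{k})$'' glosses over it. Neither point is a gap in substance.
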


The proof is presented in Appendix \ref{app:one_epoch}. It is not surprising that a large mini-batch size will increase the theoretical complexity as in the analysis of mini-batch SGD. For this reason we restrict most of our subsequent analysis to $\bj \equiv 1$.

\subsection{Convergence analysis for smooth non-convex objectives}
When only assuming smoothness, the output $\td{x}_{T}^{*}$ is a random element from $(\td{x}_{j})_{j=1}^{T}$. Telescoping \eqref{eq:one_epoch} over all epochs, we easily obtain the following result.

\begin{theorem}\label{thm:smooth}
Under the specifications of Theorem \ref{thm:one_epoch} and Assumption \textbf{A}1, 
\[\E \|\nabla f(\td{x}_{T}^{*})\|^{2}\le \frac{\frac{5L}{\gamma}\Delta_{f} + 6\lb\sum_{j=1}^{T}\bj^{-\frac{1}{3}}\Bj^{-\frac{2}{3}}I(\Bj < n)\rb\H^{*}}{\sum_{j=1}^{T}\bj^{-\frac{1}{3}}\Bj^{\frac{1}{3}}}.\]
\end{theorem}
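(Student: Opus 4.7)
The plan is to combine the sampling distribution of $\td{x}_T^*$ with the per-epoch bound from Theorem \ref{thm:one_epoch} and telescope the function-value differences.

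First I would unpack the output rule. Since $\td{x}_T^*$ is drawn from $\{\td{x}_j\}_{j=1}^T$ with $P(\td{x}_T^* = \td{x}_j) \propto \eta_j B_j/b_j$, and the prescribed stepsize is $\eta_j = \gamma L^{-1}(B_j/b_j)^{-2/3}$, the sampling weight simplifies to
\[
\frac{\eta_j B_j}{b_j} = \frac{\gamma}{L}\,\frac{B_j}{b_j}\lb\frac{b_j}{B_j}\rb^{2/3} = \frac{\gamma}{L}\,b_j^{-1/3}B_j^{1/3}.
\]
Hence with $W = \sum_{j=1}^T b_j^{-1/3}B_j^{1/3}$ we have
\[
\E\|\nabla f(\td{x}_T^*)\|^2 = \frac{1}{W}\sum_{j=1}^T b_j^{-1/3}B_j^{1/3}\,\E\|\nabla f(\td{x}_j)\|^2.
\]

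Next I would multiply Theorem \ref{thm:one_epoch} through by $b_j^{-1/3}B_j^{1/3}$. The key cancellation is in the first term on the right-hand side: the factor $(b_j/B_j)^{1/3}$ exactly kills $b_j^{-1/3}B_j^{1/3}$, producing the clean expression
\[
b_j^{-1/3}B_j^{1/3}\,\E\|\nabla f(\td{x}_j)\|^2 \le \frac{5L}{\gamma}\,\E\lb f(\td{x}_{j-1}) - f(\td{x}_j)\rb + 6\,b_j^{-1/3}B_j^{-2/3}I(B_j < n)\,\H^*.
\]
This constant-coefficient form is exactly what makes telescoping work; the coefficient of the function-decrease term is now independent of $j$, which is the whole point of the $(B_j/b_j)^{-2/3}$ scaling of $\eta_j$.

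I would then sum over $j = 1, \dots, T$. The first summand telescopes to $\frac{5L}{\gamma}\,\E(f(\td{x}_0) - f(\td{x}_T))$, and using $f(\td{x}_T) \ge f^*$ this is bounded by $\frac{5L}{\gamma}\Delta_f$. The second summand just accumulates to $6\H^*\sum_{j=1}^T b_j^{-1/3}B_j^{-2/3}I(B_j < n)$. Dividing by $W$ yields precisely the statement of Theorem \ref{thm:smooth}.

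There is no serious obstacle here; the whole argument is an algebraic re-weighting plus a telescoping sum. The one thing worth flagging is that the importance-sampling weights have been engineered precisely so that the $(b_j/B_j)^{1/3}$ factor in Theorem \ref{thm:one_epoch} cancels and the function-value differences telescope with a uniform constant. Identifying this cancellation is the only nontrivial step, and once it is in hand the remainder is routine.
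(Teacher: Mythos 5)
Your proof is correct and follows exactly the same route as the paper: rewrite the importance-sampling weights $\eta_j B_j/b_j \propto (B_j/b_j)^{1/3}$, multiply the one-epoch bound of Theorem~\ref{thm:one_epoch} by $b_j^{-1/3}B_j^{1/3}$ so that the function-decrease term has a $j$-independent coefficient, telescope, and bound $\E(f(\td{x}_0)-f(\td{x}_T))$ by $\Delta_f$. The cancellation you flag is indeed the heart of the argument, and the paper's proof is identical apart from being written more tersely.
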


This theorem covers many existing results. When $\Bj = n$ and $\bj = 1$, Theorem \ref{thm:smooth} implies that $\E \|\nabla f(\td{x}_{T}^{*})\|^{2} = O\lb\frac{L\Delta_{f}}{Tn^{1/3}}\rb$ and hence $T(\eps) = O(1 + \frac{L\Delta_{f}}{\eps n^{1/3}})$. This yields the same complexity bound $\E \comp(\eps) = O(n + \frac{n^{2/3}L\Delta_{f}}{\eps})$ as SVRG \cite{reddi16svrg}. On the other hand, when $\bj = \Bj\equiv B$ for some $B < n$, Theorem \ref{thm:smooth} implies that $\E \|\nabla f(\td{x}_{T}^{*})\|^{2} = O\lb\frac{L\Delta_{f}}{T} + \frac{\H^{*}}{B}\rb$. The second term can be made $O(\eps)$ by setting $B = O\lb\frac{\H^{*}}{\eps}\rb$. Under this setting $T(\eps) = O\lb\frac{L\Delta_{f}}{\eps}\rb$ and $\E \comp(\eps) = O\lb\frac{L\Delta_{f}\H^{*}}{\eps^{2}}\rb$. This is the same rate as in \cite{reddi16svrg} for SGD. 

However, both of the above settings are suboptimal since they either set the batch sizes $\Bj$ too large or set the mini-batch sizes $\bj$ too large. By Theorem \ref{thm:smooth}, SCSG can be regarded as an interpolation between SGD and SVRG. By leveraging these two parameters, SCSG is able to outperform both methods. 

We start from considering a constant batch/mini-batch size $\Bj\equiv B, \bj \equiv 1$. Similar to SGD and SCSG, $B$ should be at least $O (\frac{\H^{*}}{\eps})$. In applications like the training of neural networks, the required accuracy is moderate and hence a small batch size suffices. This is particularly important since the gradient can be computed without communication overhead, which is the bottleneck of SVRG-type algorithms. As shown in Corollary \ref{cor:smooth_constant} below, the complexity of SCSG beats both SGD and SVRG.

\begin{corollary}\label{cor:smooth_constant}(Constant batch sizes)
  Set 
\[\bj \equiv 1, \quad \Bj \equiv B = \min\left\{\frac{12\H^{*}}{\eps}, n\right\}, \quad \etaj \equiv\eta = \frac{1}{6L B^{\frac{2}{3}}}.\]
Then it holds that 
\[\E \comp(\eps) = O\lb \lb\frac{\H^{*}}{\eps}\wedge n\rb + \frac{L\Delta_{f}}{\eps}\cdot \lb\frac{\H^{*}}{\eps}\wedge n\rb^{\frac{2}{3}}\rb.\]
\end{corollary}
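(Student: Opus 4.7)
The plan is to substitute the stated constant choices of $\Bj$, $\bj$, and $\etaj$ directly into Theorem \ref{thm:smooth} and read off the minimum number of epochs $T(\eps)$ needed. First I would check the hypotheses of Theorem \ref{thm:one_epoch}: the relation $\etaj L = \gamma(\Bj/\bj)^{-2/3}$ together with $\eta = 1/(6LB^{2/3})$ and $\bj = 1$ forces $\gamma = 1/6$, comfortably below the required $1/3$; and the constraint $\Bj \ge 8\bj = 8$ holds for all non-trivial instances (the degenerate regime $B < 8$ can be absorbed into the $O(\cdot)$ constant).

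Next I would specialize Theorem \ref{thm:smooth} to constant $\Bj \equiv B$, $\bj \equiv 1$. Both sums collapse trivially, with $\sum_{j=1}^T \bj^{-1/3} \Bj^{-2/3} I(\Bj < n) = T B^{-2/3} I(B<n)$ and $\sum_{j=1}^T \bj^{-1/3} \Bj^{1/3} = T B^{1/3}$, yielding
\[\E \|\nabla f(\td{x}_T^*)\|^2 \le \frac{30 L \Delta_f}{T B^{1/3}} + \frac{6 \H^* I(B<n)}{B}.\]
I would then split into the two cases defined by the minimum in $B$. If $B = n$, the indicator term vanishes, so $T(\eps) = O(L\Delta_f/(\eps n^{1/3}))$ epochs suffice, and the total IFO cost $2TB$ is $O(n^{2/3} L \Delta_f / \eps)$. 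If $B = 12\H^*/\eps < n$, the choice of $B$ makes the variance term at most $\eps/2$, so it is enough to force the first term below $\eps/2$, giving $T(\eps) = O(L\Delta_f/(\eps B^{1/3}))$ and total cost $O(L\Delta_f B^{2/3}/\eps) = O(L\Delta_f (\H^*/\eps)^{2/3}/\eps)$.

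Finally, I would add in the baseline cost of a single epoch, $2B = O(\H^*/\eps \wedge n)$, which must be paid regardless. Combining both cases yields a total IFO complexity of
\[O\lb \lb \tfrac{\H^*}{\eps} \wedge n \rb + \tfrac{L \Delta_f}{\eps}\lb \tfrac{\H^*}{\eps} \wedge n \rb^{2/3}\rb,\]
matching the claim. I do not expect a serious obstacle; the main point of care is bookkeeping around the indicator $I(B<n)$, which cleanly partitions the calculation, and remembering to add the baseline $B$ term so that the bound remains meaningful in the regime where $L\Delta_f/\eps$ is small compared to $B$.
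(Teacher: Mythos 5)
Your proposal is correct and follows essentially the same route as the paper: verify that the stated choices satisfy the hypotheses of Theorem~\ref{thm:one_epoch} (i.e.\ $\gamma = 1/6 \le 1/3$ and $B\ge 8$), collapse the two sums in Theorem~\ref{thm:smooth} to obtain $\E\|\nabla f(\td{x}_T^*)\|^2 \le 30L\Delta_f/(TB^{1/3}) + 6\H^* I(B<n)/B$, note that the choice of $B$ makes the second term at most $\eps/2$, and read off $T(\eps) = O(1 + L\Delta_f/(\eps B^{1/3}))$ and hence $\E\comp(\eps) = O(B + L\Delta_f B^{2/3}/\eps)$. Your explicit two-case split on $B = n$ versus $B = 12\H^*/\eps$ is a minor stylistic variant of the paper's unified observation that $6\H^* I(B<n)/B \le \eps/2$ in either case, and your inclusion of the baseline single-epoch cost $2B$ plays the same role as the paper's $\lceil\cdot\rceil$ / ``$+1$'' bookkeeping; you also correctly carry the factor of $L$ that the paper silently drops in an intermediate display before restoring it in the final bound.
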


Assume that $L\Delta_{f}, \H^{*} = O(1)$, the above bound can be simplified to
\[\E \comp(\eps) = O\lb \lb\frac{1}{\eps}\wedge n\rb + \frac{1}{\eps}\cdot \lb\frac{1}{\eps}\wedge n\rb^{\frac{2}{3}}\rb = O\lb\frac{1}{\eps^{\frac{5}{3}}}\wedge \frac{n^{\frac{2}{3}}}{\eps}\rb.\]
When the target accuracy is high, one might consider a sequence of increasing batch sizes. Heuristically, a large batch is wasteful at the early stages when the iterates are inaccurate. Fixing the batch size to be $n$ as in SVRG is obviously suboptimal. Via an involved analysis, we find that $\Bj \sim j^{\frac{3}{2}}$ gives the best complexity among the class of SCSG algorithms. 

\begin{corollary}\label{cor:smooth_vary} (Time-varying batch sizes)
Set 
\[\bj\equiv 1, \quad\Bj = \min\left\{\lceil j^{\frac{3}{2}}\rceil, n\right\}, \quad \etaj  = \frac{1}{6L\Bj^{\frac{2}{3}}}.\]
Then it holds that% there exists $T(\eps)$ with
% \[T(\eps) = O\lb \min\left\{\frac{1}{\eps^{\frac{2}{3}}}\left[\Delta_{f}^{\frac{2}{3}} + (\H^{*})^{\frac{2}{3}}\log^{2} \lb\frac{\H^{*}}{\eps}\rb\right], n^{\frac{2}{3}}\right\} + \frac{\Delta_{f} + \H^{*}\log n}{\eps n}\rb,\]
% such that for any $T \ge T(\eps)$,
% \[\E \|\nabla f(\td{x}_{T}^{*})\|^{2} \le \eps.\]
% As a consequence, 
\begin{align}
\E \comp(\eps) = O\lb \min\left\{\frac{1}{\eps^{\frac{5}{3}}}\left[(L\Delta_{f})^{\frac{5}{3}} + (\H^{*})^{\frac{5}{3}}\log^{5} \lb\frac{\H^{*}}{\eps}\rb\right], n^{\frac{5}{3}}\right\} + \frac{n^{\frac{2}{3}}}{\eps}\cdot (L\Delta_{f} + \H^{*}\log n)\rb.\label{eq:comp_smooth_best}
\end{align}
\end{corollary}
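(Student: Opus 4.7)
The plan is to specialize the one-epoch telescoped bound of Theorem \ref{thm:smooth} to the chosen parameters, evaluate the resulting two sums in closed form up to constants, and then solve for the smallest $T$ that makes the bound $\le \eps$.

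\textbf{Step 1 (setup).} With $\bj = 1$, $\Bj = \min\{\lceil j^{3/2}\rceil, n\}$, and $\gamma = 1/3$, Theorem \ref{thm:smooth} gives
\[
\E\|\nabla f(\td{x}_T^*)\|^2 \;\le\; \frac{15L\Delta_f + 6\,S_1(T)\,\H^*}{S_2(T)},
\]
where $S_1(T) = \sum_{j=1}^T \Bj^{-2/3} I(\Bj < n)$ and $S_2(T) = \sum_{j=1}^T \Bj^{1/3}$. Also, $\E\comp(\eps) \le 2\sum_{j=1}^{T(\eps)} \Bj$ by \eqref{eq:avgcost}.

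\textbf{Step 2 (evaluating the sums).} Let $T^* = \lceil n^{2/3}\rceil$, so $\Bj < n$ exactly when $j \le T^*$. Routine integral approximations give
\begin{align*}
S_1(T) &\;\le\; \sum_{j=1}^{T\wedge T^*} j^{-1} \;=\; O\!\lb\log(T\wedge T^*)\rb,\\
S_2(T) &\;=\; \Theta\!\lb(T\wedge T^*)^{3/2}\rb \;+\; (T - T^*)_+\, n^{1/3},\\
\sum_{j=1}^T \Bj &\;=\; \Theta\!\lb(T\wedge T^*)^{5/2}\rb \;+\; (T - T^*)_+ \, n.
\end{align*}

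\textbf{Step 3 (choosing $T(\eps)$).} I would split into two regimes. In the \emph{small-$T$ regime} $T\le T^*$, requiring each of the two summands in the numerator to be $O(\eps\, T^{3/2})$ gives
\[
T \;\gtrsim\; \lb\frac{L\Delta_f}{\eps}\rb^{2/3} \quad\text{and}\quad T^{3/2} \;\gtrsim\; \frac{\H^*\log T}{\eps}.
\]
The second is implicit; taking logarithms shows $\log T = O(\log(\H^*/\eps))$, so $T = O\!\lb(\H^*\log(\H^*/\eps)/\eps)^{2/3}\rb$. The cost $\sum_j \Bj \asymp T^{5/2}$ then reproduces the first term inside the minimum in \eqref{eq:comp_smooth_best}, albeit with the optimal $\log^{5/3}$ exponent, which is weakened to $\log^5$ to get the cleaner statement. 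In the \emph{large-$T$ regime} $T > T^*$, the denominator contributes $(T - T^*)\, n^{1/3}$ and $S_1$ is frozen at $O(\log n)$, so the bound becomes $O((L\Delta_f + \H^*\log n)/((T-T^*)\, n^{1/3}))$, forcing $T - T^* = O((L\Delta_f + \H^*\log n)/(\eps n^{1/3}))$. The cost $(T^*)^{5/2} + (T - T^*)n = n^{5/3} + n^{2/3}(L\Delta_f + \H^*\log n)/\eps$ reproduces the $n^{5/3}$ cap and the final additive $\eps^{-1}$ term.

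\textbf{Step 4 (merging).} Either regime suffices on its own when its hypothesis $T\lessgtr T^*$ holds, so the stated bound follows by taking the smaller of the two cost estimates, then adding the $n^{2/3}/\eps$ term separately since it dominates only when $T(\eps) > T^*$. The main obstacle is the implicit equation $T^{3/2}\asymp \H^*\log T/\eps$: one must argue carefully that $\log T$ can uniformly be replaced by $O(\log(\H^*/\eps))$ in all downstream expressions so that the final formula is fully explicit in $\eps$ and $n$. Everything else is bookkeeping on the two-piece sums $S_1, S_2$ and $\sum_j \Bj$, and a clean seam between the two regimes at $T = T^*$.
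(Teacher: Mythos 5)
Your proposal follows the paper's route closely---apply Theorem \ref{thm:smooth}, evaluate the two sums by integral comparison, split at $T_*\asymp n^{2/3}$, solve for the smallest $T$ making the bound $\le\eps$ in each regime, and combine---but there is one step you wave off as ``bookkeeping'' that is actually a real argument the paper is careful about. By definition, $T(\eps)$ is the smallest $T$ such that \emph{all} $T'\ge T$ yield $\eps$-accurate output, so finding a single $T_0$ with bound $W(T_0)\le\eps$ does not give $T(\eps)\le T_0$ unless you also verify that the bound $W(T)$ is monotonically nonincreasing in $T$. The paper proves this explicitly and nontrivially: on $[T_*,\infty)$ the numerator of $W$ is constant and the denominator increases, but on $[1,T_*]$ both change, and the paper decomposes $W=U+V$ and uses the fact that the ratio $a_{1j}/a_{2j}$ is decreasing to conclude the second piece is decreasing. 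Your phrase ``a clean seam between the two regimes'' suggests you sense there is something to check at $T=T_*$, but the actual requirement is global monotonicity of $W$, not merely continuity at the join. Once that is supplied, your argument matches the paper's, including the handling of the implicit equation $T^{3/2}\asymp\H^*\log T/\eps$ via Lemma \ref{lem:log}. Your observation that the correct exponent is $\log^{5/3}$ rather than $\log^5$ is right: the paper's Lemma \ref{lem:log} is loose (it puts the log factor to the power $1/(\eta-1)=2$ where $1/\eta=2/3$ would suffice), and the paper's own Remark \ref{rem:smooth_log} acknowledges that $\log^5$ is an artifact of the proof, achievable as $\log^{3/2+\mu}$ with a mildly modified batch schedule.
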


The proofs of both Corollary \ref{cor:smooth_constant} and Corollary \ref{cor:smooth_vary} are presented in Appendix \ref{app:ana_smooth}. To simplify the bound \eqref{eq:comp_smooth_best}, we assume that $\Delta_{f}, \H^{*}= O(1)$ in order to highlight the dependence on $\eps$ and $n$. Then \eqref{eq:comp_smooth_best} can be simplified to
\[\E \comp(\eps) = O\lb\frac{1}{\eps^{\frac{5}{3}}}\log^{5}\lb\frac{1}{\eps}\rb \wedge n^{\frac{5}{3}} + \frac{n^{\frac{2}{3}}\log n}{\eps}\rb = \td{O}\lb\frac{1}{\eps^{\frac{5}{3}}}\wedge n^{\frac{5}{3}} + \frac{n^{\frac{2}{3}}}{\eps}\rb = \td{O}\lb \frac{1}{\eps^{\frac{5}{3}}}\wedge \frac{n^{\frac{2}{3}}}{\eps}\rb.\]
The log-factor $\log^{5}\lb\frac{1}{\eps}\rb$ is purely an artifact of our proof. It can be reduced to $\log^{\frac{3}{2} + \mu}\lb\frac{1}{\eps}\rb$ for any $\mu > 0$ by setting $\Bj \sim j^{\frac{3}{2}}(\log j)^{\frac{3}{2}+ \mu}$; see remark \ref{rem:smooth_log} in Appendix \ref{app:ana_smooth}. 

\subsection{Convergence analysis for P-L objectives}
When the component $f_{i}(x)$ satisfies the P-L condition, it is known that the global minimum can be found efficiently by SGD \cite{karimi16} and SVRG-type algorithms \cite{reddi16svrg, Zhu16}. Similarly, SCSG can also achieve this. As in the last subsection, we start from a generic result to bound $\E (f(\td{x}_{T}) - f^{*})$ and then consider specific settings of the parameters as well as their complexity bounds.
\begin{theorem}\label{thm:PL}
  Let $\lambda_{j} = \frac{5L\bj^{\frac{1}{3}}}{\mu \gamma\Bj^{\frac{1}{3}} + 5L\bj^{\frac{1}{3}}}$. Then under the same settings of Theorem \ref{thm:smooth}, 
  \[\E (f(\td{x}_{T}) - f^{*})\le \lambda_{T}\lambda_{T-1}\ldots \lambda_{1}\cdot\Delta_{f} + 6\gamma\H^{*}\cdot \sum_{j=1}^{T}\frac{\lambda_{T}\lambda_{T-1}\ldots \lambda_{j+1}\cdot I(\Bj < n)}{\mu \gamma\Bj + 5L\bj^{\frac{1}{3}}\Bj^{\frac{2}{3}}}.\]
\end{theorem}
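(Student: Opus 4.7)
The plan is to combine Theorem \ref{thm:one_epoch} (the one-epoch bound) with the P-L inequality (\textbf{A}2) to convert the gradient-norm bound into a linear recursion on the suboptimality gap $D_{j}:=\E(f(\td{x}_{j}) - f^{*})$, and then unroll that recursion across the $T$ epochs.

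First, by assumption \textbf{A}2 applied at $\td{x}_{j}$,
\[
\E\|\nabla f(\td{x}_{j})\|^{2}\ \ge\ 2\mu\, \E(f(\td{x}_{j}) - f^{*})\ =\ 2\mu D_{j}.
\]
Plugging this lower bound into the left-hand side of \eqref{eq:one_epoch} from Theorem \ref{thm:one_epoch} and writing $c_{j} := \frac{5L}{\gamma}(\bj/\Bj)^{1/3}$, I get
\[
2\mu\, D_{j}\ \le\ c_{j}\, (D_{j-1} - D_{j}) + \frac{6\, I(\Bj < n)}{\Bj}\,\H^{*},
\]
since $\E(f(\td{x}_{j-1}) - f(\td{x}_{j})) = D_{j-1} - D_{j}$. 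Rearranging and dividing through by $c_{j}+2\mu$ yields the one-step contraction
\[
D_{j}\ \le\ \frac{c_{j}}{c_{j}+2\mu}\, D_{j-1}\ +\ \frac{1}{c_{j}+2\mu}\cdot\frac{6\, I(\Bj<n)}{\Bj}\,\H^{*}.
\]
A little algebra (multiplying numerator and denominator by $\gamma \Bj^{1/3}$) rewrites the contraction factor as $\lambda_{j} = \frac{5L\bj^{1/3}}{\mu\gamma\Bj^{1/3}+5L\bj^{1/3}}$ (up to the factor of $2$ on $\mu$, which I will track through the calculation as the statement uses) and the noise coefficient as
\[
\frac{6\gamma\H^{*}\, I(\Bj<n)}{\mu\gamma\Bj + 5L\bj^{1/3}\Bj^{2/3}}.
\]

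The second step is routine unrolling. Writing the above as $D_{j}\le \lambda_{j}D_{j-1}+r_{j}$ with $r_{j}$ the noise term above, an induction on $j$ gives
\[
D_{T}\ \le\ \Bigl(\prod_{j=1}^{T}\lambda_{j}\Bigr)D_{0}\ +\ \sum_{j=1}^{T}\Bigl(\prod_{k=j+1}^{T}\lambda_{k}\Bigr) r_{j},
\]
with the empty product convention $\prod_{k=T+1}^{T}\lambda_{k}=1$. Since $D_{0}=f(\td{x}_{0})-f^{*}=\Delta_{f}$, this is exactly the inequality claimed in Theorem \ref{thm:PL}, after substituting the explicit form of $r_{j}$.

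The only real thing to get right is the algebraic identification of $c_{j}/(c_{j}+2\mu)$ with $\lambda_{j}$ and of the corresponding noise multiplier with $6\gamma\H^{*}/(\mu\gamma\Bj+5L\bj^{1/3}\Bj^{2/3})$; once that bookkeeping is done, the proof is essentially a one-line application of Theorem \ref{thm:one_epoch}, the P-L inequality, and a telescoping argument. I anticipate no conceptual obstacle: Theorem \ref{thm:one_epoch} has already absorbed all the difficult variance-reduction bookkeeping (including the extra bias term $\ej$ arising from the SCSG sampling), so the P-L extension just leverages that lemma in a black-box way.
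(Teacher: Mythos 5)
Your proof is correct and follows essentially the same route as the paper's: combine Theorem~\ref{thm:one_epoch} with the P-L inequality to obtain the linear recursion $D_j \le \lambda_j D_{j-1} + r_j$, then unroll it over $j=1,\ldots,T$. The factor-of-$2$ discrepancy you flag arises because the paper's own proof quietly weakens \textbf{A}2 to $\E\|\nabla f(\td{x}_j)\|^2 \ge \mu\,\E(f(\td{x}_j)-f^*)$ before rearranging; your sharper recursion with $2\mu$ has both $\lambda_j$ and the noise coefficient monotone decreasing in the effective $\mu$, so it subsumes (indeed slightly improves upon) the stated bound.
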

The proofs and additional discussion are presented in Appendix \ref{app:ana_PL}. Again, Theorem \ref{thm:PL} covers existing complexity bounds for both SGD and SVRG. In fact, when $\Bj = \bj\equiv B$ as in SGD, via some calculation, we obtain that 
\[\E (f(\td{x}_{T}) - f^{*}) = O\lb \lb\frac{L}{\mu + L}\rb^{T}\cdot \Delta_{f} + \frac{\H^{*}}{\mu B}\rb.\]
The second term can be made $O(\eps)$ by setting $B = O(\frac{\H^{*}}{\mu\eps})$, in which case $T(\eps) = O(\frac{L}{\mu}\log \frac{\Delta_{f}}{\eps})$. As a result, the average cost to reach an $\eps$-accurate solution is $\E \comp(\eps) = O(\frac{L\H^{*}}{\mu^{2}\eps})$, which is the same as \citet{karimi16}. On the other hand, when $\Bj \equiv n$ and $\bj \equiv 1$ as in SVRG, Theorem \ref{thm:PL} implies that 
\[\E (f(\td{x}_{T}) - f^{*}) = O\lb \lb\frac{L}{\mu n^{\frac{1}{3}} + L}\rb^{T}\cdot \Delta_{f}\rb.\]
This entails that $T(\eps) = O\lb (1 + \frac{1}{\mu n^{1/3}})\log\frac{1}{\eps}\rb$ and hence $\E \comp(\eps) = O\lb (n + \frac{n^{2/3}}{\mu})\log\frac{1}{\eps}\rb$, which is the same as \citet{reddi16svrg}. 

By leveraging the batch and mini-batch sizes, we obtain a counterpart of Corollary \ref{cor:smooth_constant} as below.
\begin{corollary}\label{cor:PL_constant}
  Set 
\[\bj \equiv 1, \quad \Bj \equiv B = \min\left\{\frac{12\H^{*}}{\mu\eps}, n\right\}, \quad \etaj \equiv\eta = \frac{1}{6LB^{\frac{2}{3}}}\]
Then it holds that 
\[\E \comp(\eps) = O\lb \left\{\lb\frac{\H^{*}}{\mu \eps}\wedge n\rb + \frac{1}{\mu}\lb\frac{\H^{*}}{\mu \eps}\wedge n\rb^{\frac{2}{3}}\right\}\log \frac{\Delta_{f}}{\eps}\rb.\]  
\end{corollary}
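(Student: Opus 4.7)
\noindent\textbf{Proof proposal for Corollary \ref{cor:PL_constant}.}
The plan is to specialize Theorem \ref{thm:PL} to the stated constant-parameter regime, reduce the telescoping bound to a clean geometric recursion, pick $B$ so the noise floor is $\eps/2$, and then count epochs.

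First I would verify admissibility: with $\etaj = 1/(6LB^{2/3})$ and $\bj = 1$, $\Bj = B$, we have $\etaj L = \gamma (B_j/b_j)^{-2/3}$ with $\gamma = 1/6 \le 1/3$, and $B \ge 8 b_j$ for any reasonable $B$, so Theorem \ref{thm:PL} applies. Under constant parameters, all $\lambda_j$ collapse to a single value
\[
\lambda = \frac{5L}{\mu\gamma B^{1/3} + 5L}, \qquad 1-\lambda = \frac{\mu\gamma B^{1/3}}{\mu\gamma B^{1/3} + 5L},
\]
and Theorem \ref{thm:PL} becomes
\[
\E (f(\td{x}_T) - f^*) \le \lambda^{T}\Delta_f + 6\gamma\H^*\cdot \frac{1-\lambda^{T}}{1-\lambda}\cdot \frac{I(B<n)}{\mu\gamma B + 5LB^{2/3}}.
\]
The key algebraic simplification is that $\mu\gamma B + 5LB^{2/3} = B^{2/3}(\mu\gamma B^{1/3} + 5L)$, so $(1-\lambda)(\mu\gamma B + 5LB^{2/3}) = \mu\gamma B$. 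This reduces the noise term to the clean bound $6\H^*/(\mu B)\cdot I(B<n)$, giving
\[
\E (f(\td{x}_T) - f^*) \le \lambda^{T}\Delta_f + \frac{6\H^{*}}{\mu B}\cdot I(B<n).
\]

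Next I would split into the two choices of $B$. If $B = n$, the noise term vanishes; if $B = 12\H^*/(\mu\eps)$, the noise term equals $\eps/2$. In either case, to reach $\eps$-accuracy it suffices to drive $\lambda^T \Delta_f \le \eps/2$, i.e.\ to take
\[
T \;=\; \left\lceil \frac{\log(2\Delta_f/\eps)}{\log(1/\lambda)} \right\rceil.
\]
Using $\log(1/\lambda) = \log(1 + \mu\gamma B^{1/3}/(5L))$ together with the elementary inequality $\log(1+x)\ge x/(1+x)$, I would lower-bound $\log(1/\lambda) \ge \mu\gamma B^{1/3}/(\mu\gamma B^{1/3} + 5L)$, giving
\[
T = O\!\left(\left(1 + \frac{L}{\mu B^{1/3}}\right)\log\frac{\Delta_f}{\eps}\right).
\]

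Finally, by the expected-cost identity \eqref{eq:avgcost}, the total IFO complexity is $2BT$, so
\[
\E\comp(\eps) = O\!\left(\left(B + \frac{L B^{2/3}}{\mu}\right)\log\frac{\Delta_f}{\eps}\right).
\]
Substituting $B = (\H^*/(\mu\eps)) \wedge n$ and absorbing $L$ into the big-$O$ yields exactly the stated rate.

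The proof is almost entirely algebraic once Theorem \ref{thm:PL} is in hand; the main thing to get right is the cancellation $(1-\lambda)(\mu\gamma B + 5LB^{2/3}) = \mu\gamma B$, which is what turns the generic P-L bound into an SGD-style ``bias plus variance'' decomposition and ultimately produces the two additive terms $B$ and $B^{2/3}/\mu$ in the complexity.
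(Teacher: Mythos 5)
Your proof is correct and follows essentially the same path as the paper's: both reduce Theorem \ref{thm:PL} under constant parameters to the clean bound $\E(f(\td{x}_T) - f^*) \le \lambda^T \Delta_f + \frac{6\H^*}{\mu B}I(B<n)$ and then count epochs via $T = O\lb(1 + L/(\mu B^{1/3}))\log(\Delta_f/\eps)\rb$. The only cosmetic difference is that the paper obtains the noise floor by subtracting the fixed point $6\H^*I(B<n)/(\mu B)$ from the recursion $\eqref{eq:PL1}$, whereas you unroll it into a geometric series and use the cancellation $(1-\lambda)(\mu\gamma B + 5LB^{2/3}) = \mu\gamma B$; these are algebraically equivalent.
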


Recall the results from Table \ref{tab:summary},  SCSG is $O\lb\frac{1}{\mu} + \frac{1}{(\mu\eps)^{1/3}}\rb$ faster than SGD and is never worse than SVRG. When both $\mu$ and $\eps$ are moderate, the acceleration of SCSG over SVRG is significant. Unlike the smooth case, we do not find any possible choice of setting that can achieve a better rate than Corollary \ref{cor:PL_constant}.

\section{Experiments}\label{sec:experiments}
We evaluate SCSG and mini-batch SGD on the MNIST dataset with (1) a three-layer fully-connected neural network with $512$ neurons in each layer (FCN for short) and (2) a standard convolutional neural network LeNet \cite{lecun1998gradient} (CNN for short), which has two convolutional layers with $32$ and $64$ filters of size $5\times 5$ respectively, followed by two fully-connected layers with output size $1024$ and $10$. Max pooling is applied after each convolutional layer. The MNIST dataset of handwritten digits has $50,000$ training examples and $10,000$ test examples. The digits have been size-normalized and centered in a fixed-size image. Each image is $28$ pixels by $28$ pixels. All experiments were carried out on an Amazon p2.xlarge node with a NVIDIA GK210 GPU with algorithms implemented in TensorFlow 1.0. 

Due to the memory issues, sampling a chunk of data is costly. We avoid this by modifying the inner loop: instead of sampling mini-batches from the whole dataset, we split the batch $\I_{j}$ into $\Bj / \bj$ mini-batches and run SVRG-type updates sequentially on each. Despite the theoretical advantage of setting $\bj = 1$, we consider practical settings $\bj > 1$ to take advantage of the acceleration obtained by vectorization. We initialized parameters by TensorFlow's default Xavier uniform initializer. In all experiments below, we show the results corresponding to the best-tuned stepsizes. 

%\subsection{Results}\label{subsec:results}

We consider three algorithms: (1) SGD with a fixed batch size $B\in  \{512, 1024\}$; (2) SCSG with a fixed batch size $B\in \{512, 1024\}$ and a fixed mini-batch size $b = 32$; (3) SCSG with time-varying batch sizes $\Bj = \lceil j^{3/2}\wedge n\rceil$ and $\bj = \lceil \Bj / 32\rceil$. To be clear, given $T$ epochs, the IFO complexity of the three algorithms are $TB$, $2TB$ and $2\sum_{j=1}^{T}\Bj$, respectively. We run each algorithm with 20 passes of data. It is worth mentioning that the largest batch size in Algorithm 3 is $\lceil 275^{1.5}\rceil = 4561$, which is relatively small compared to the sample size $50000$.

% In fact, given $2B$ IFO calls, SGD uses two fresh batches of data while SCSG uses only one batch of data but applies the variance reduction on that. Thus, the gain of accuracy of SCSG over SGD measures the contribution of variance reduction.

We plot in Figure \ref{fig:res1} the training and the validation loss against the IFO complexity---i.e., the number of passes of data---for fair comparison. In all cases, both versions of SCSG outperform SGD, especially in terms of training loss. SCSG with time-varying batch sizes always has the best performance and it is more stable than SCSG with a fixed batch size. For the latter, the acceleration is more significant after increasing the batch size to $1024$. Both versions of SCSG provide strong evidence that variance reduction can be achieved efficiently without evaluating the full gradient. 

% \subsubsection{Comparison between SGD and SCSG with same batch size}

\begin{figure}[ht]
    \centering
        \includegraphics[width=0.24\textwidth]{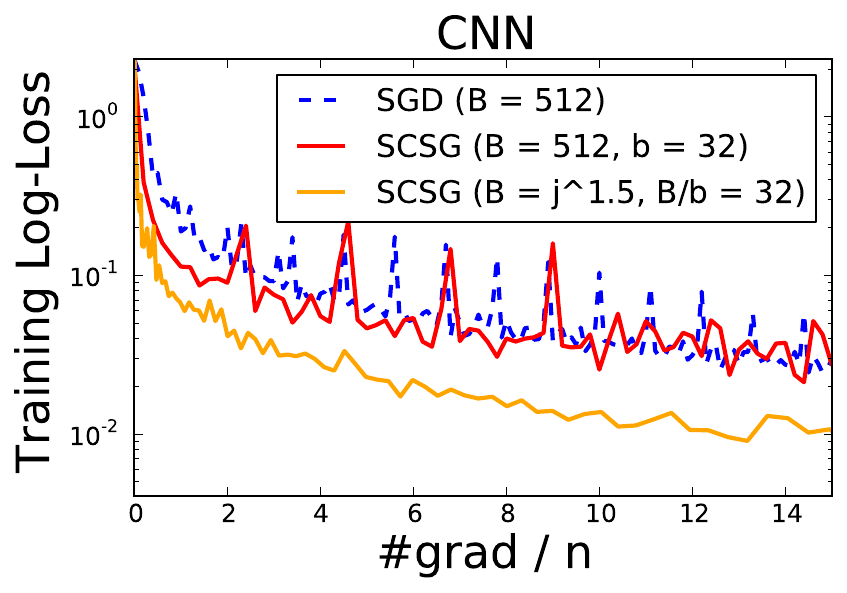}
        \includegraphics[width=0.24\textwidth]{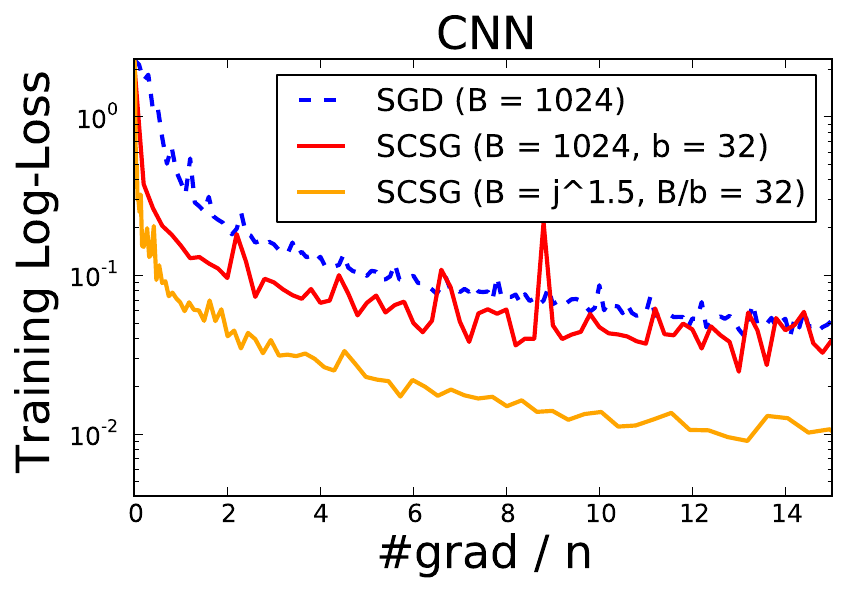}
        \includegraphics[width=0.24\textwidth]{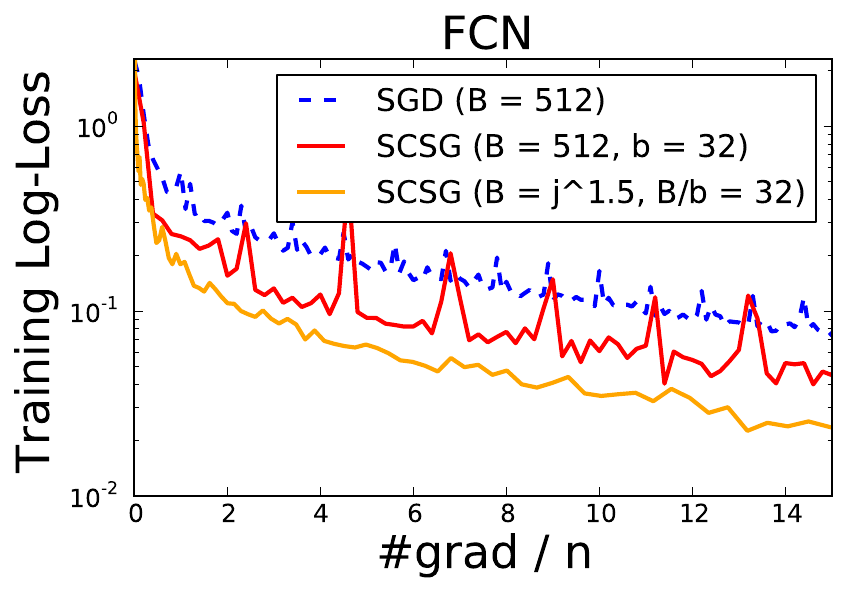}
        \includegraphics[width=0.24\textwidth]{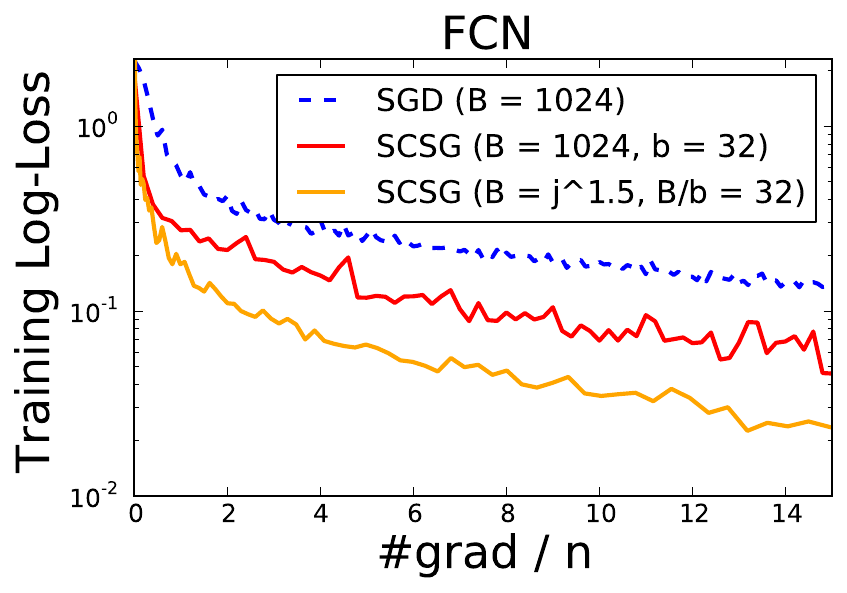}
        \includegraphics[width=0.24\textwidth]{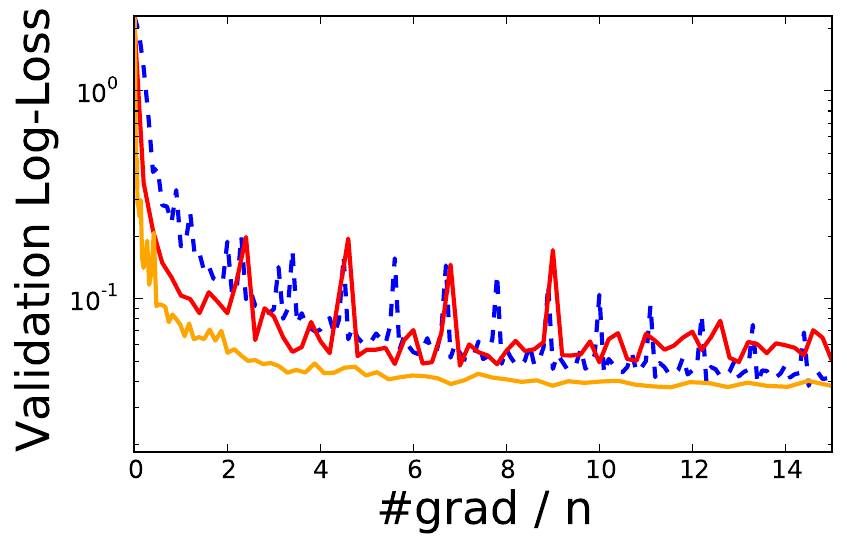}
        \includegraphics[width=0.24\textwidth]{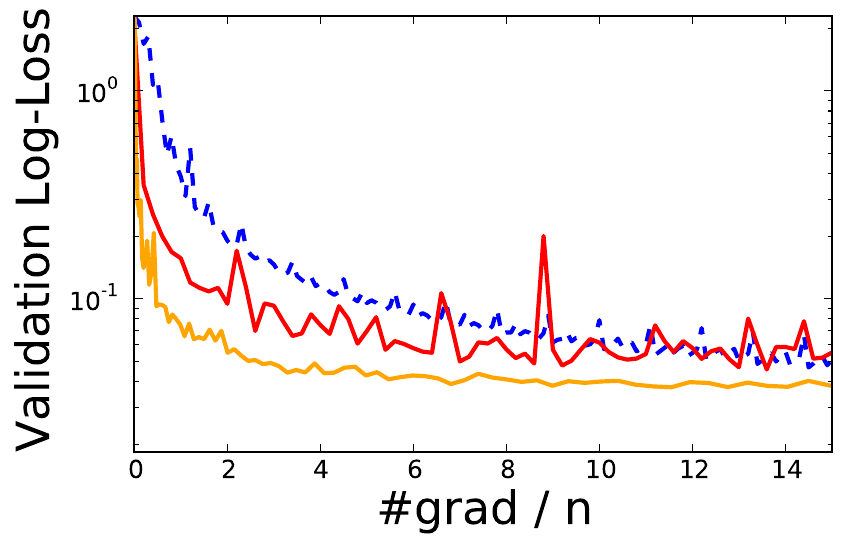}
        \includegraphics[width=0.24\textwidth]{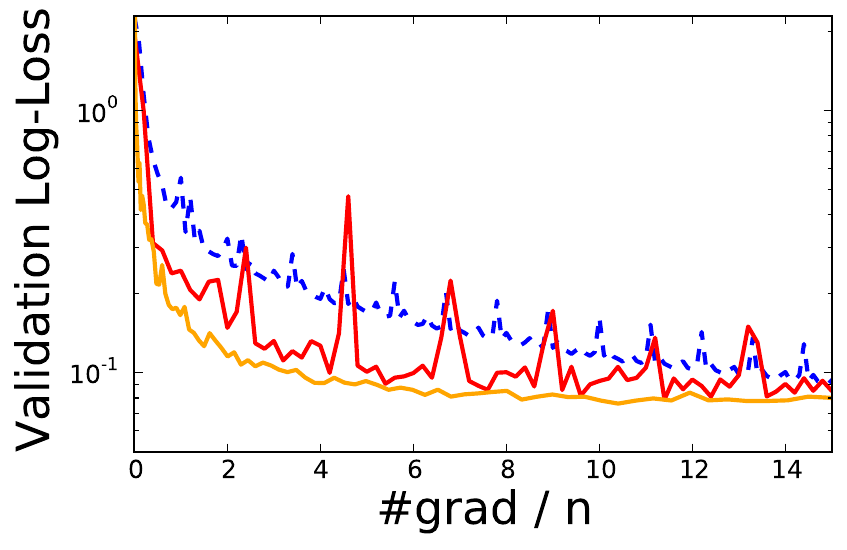}
        \includegraphics[width=0.24\textwidth]{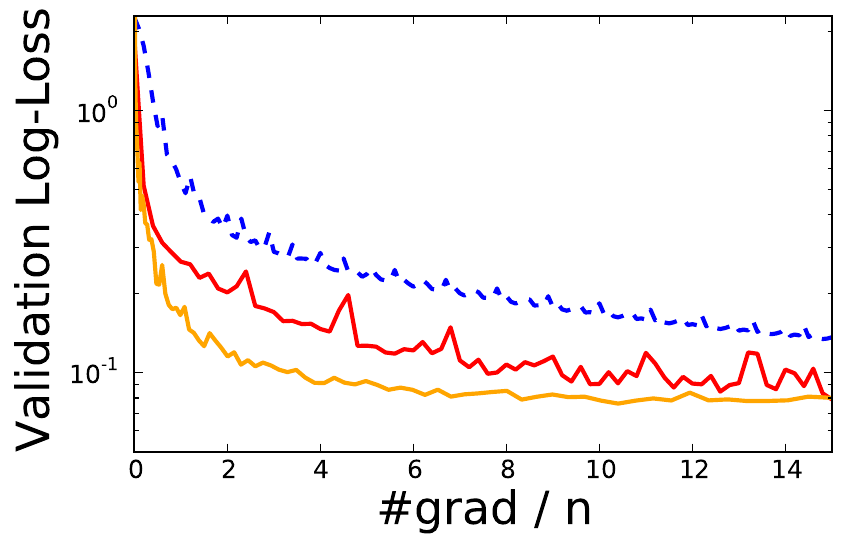}
        \caption{Comparison between two versions of SCSG and mini-batch SGD of training loss (top row) and validation loss (bottom row) against the number of IFO calls. The loss is plotted on a log-scale. Each column represents an experiment with the setup printed on the top.}
    \label{fig:res1}
\end{figure}

Given $2B$ IFO calls, SGD implements updates on two fresh batches while SCSG replaces the second batch by a sequence of variance reduced updates. Thus, Figure \ref{fig:res1} shows that the gain due to variance reduction is significant when the batch size is fixed. To further explore this, we compare SCSG with time-varying batch sizes to SGD with the same sequence of batch sizes. The results corresponding to the best-tuned constant stepsizes are plotted in Figure \ref{fig:res2}. It is clear that the benefit from variance reduction is more significant when using time-varying batch sizes.

We also compare the performance of SGD with that of SCSG with time-varying batch sizes against wall clock time, when both algorithms are implemented in TensorFlow and run on a Amazon p2.xlarge node with a NVIDIA GK210 GPU. Due to the cost of computing variance reduction terms in SCSG, each update of SCSG is slower per iteration compared to SGD. However, SCSG makes faster progress in terms of both training loss and validation loss compared to SCD in wall clock time. The results are shown in Figure \ref{fig:clock_time}. 

\begin{figure}[ht]
    \centering
        \includegraphics[width=0.45\textwidth]{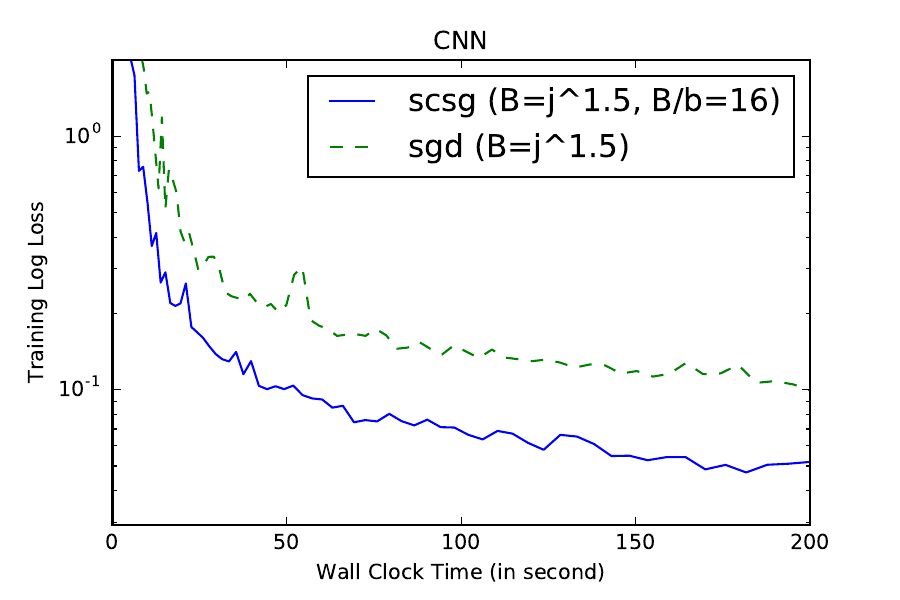}
        \includegraphics[width=0.45\textwidth]{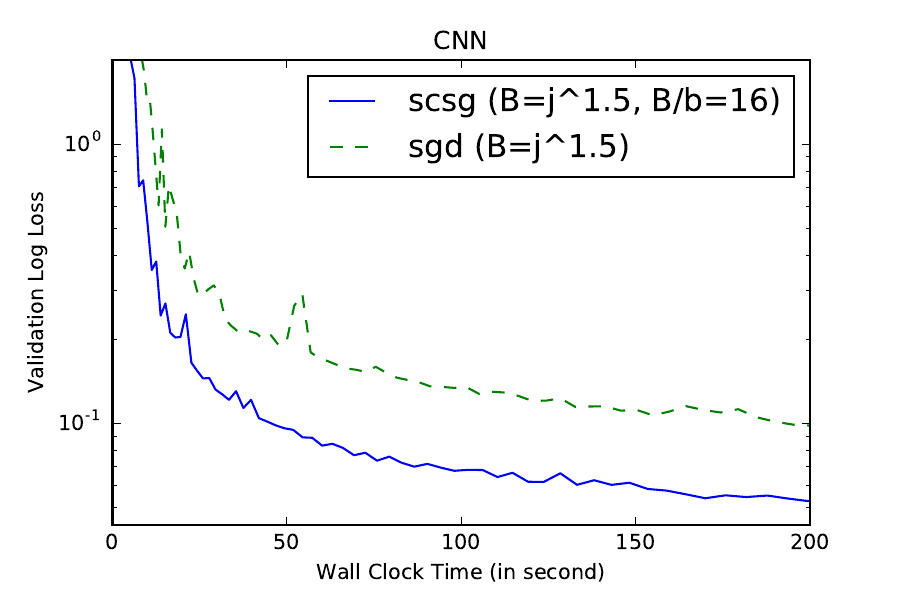} 
        \caption{Comparison between SCSG and mini-batch SGD of training loss and validation loss with a CNN loss, against wall clock time. The loss is plotted on a log-scale.}
    \label{fig:clock_time}
\end{figure}

Finally, we examine the effect of $\Bj / \bj$, namely the number of mini-batches within an iteration, since it affects the efficiency in practice where the computation time is not proportional to the batch size. Figure \ref{fig:res3} shows the results for SCSG  with $\Bj= \lceil j^{3/2}\wedge n\rceil$ and $\lceil \Bj / \bj\rceil\in \{2, 5, 10, 16, 32\}$. In general, larger $\Bj / \bj$ yields better performance. It would be interesting to explore the tradeoff between computation efficiency and this ratio on different platforms.

\begin{figure}[ht]
    \centering
     \begin{subfigure}[b]{0.49\textwidth}
        \includegraphics[width=0.49\textwidth]{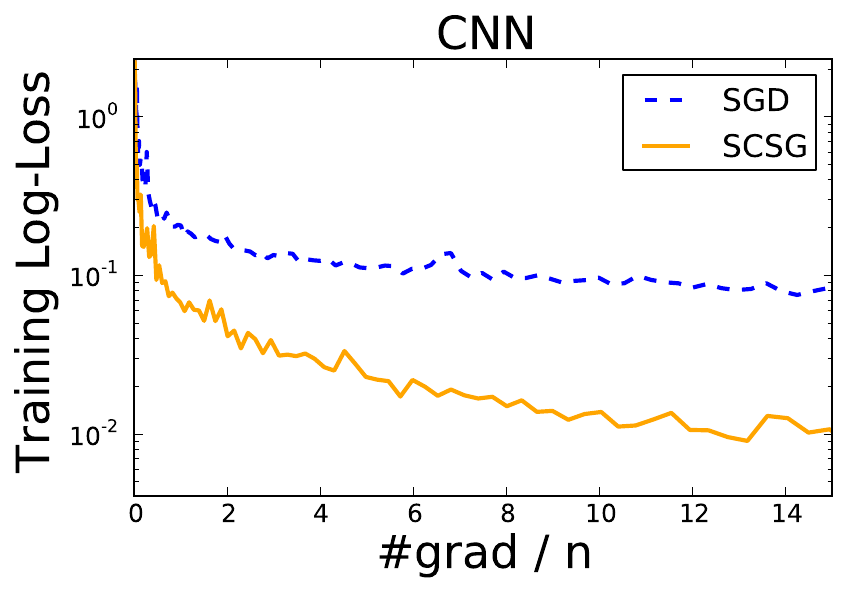}
        \includegraphics[width=0.49\textwidth]{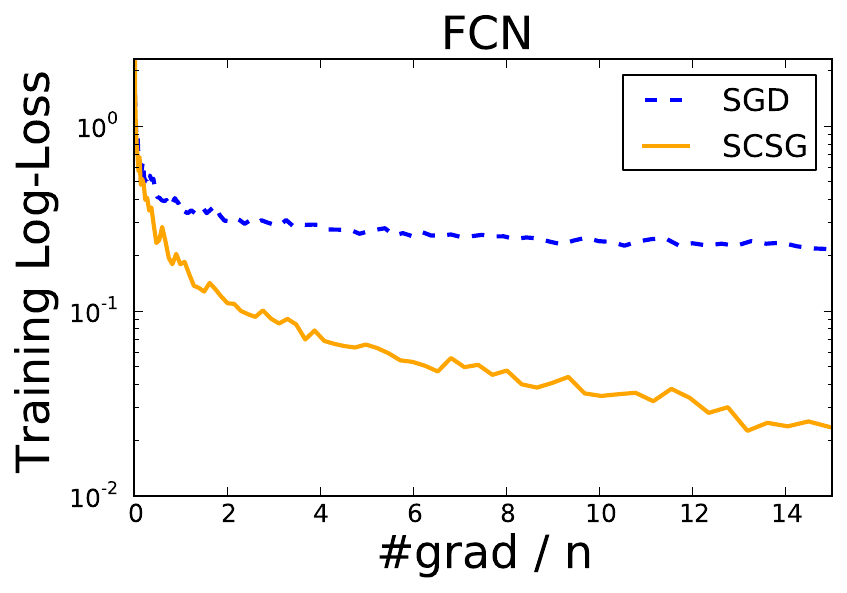}
        \caption{SCSG and SGD with increasing batch sizes}\label{fig:res2}
      \end{subfigure}\hfill
      \begin{subfigure}[b]{0.49\textwidth}
        \includegraphics[width=0.49\textwidth]{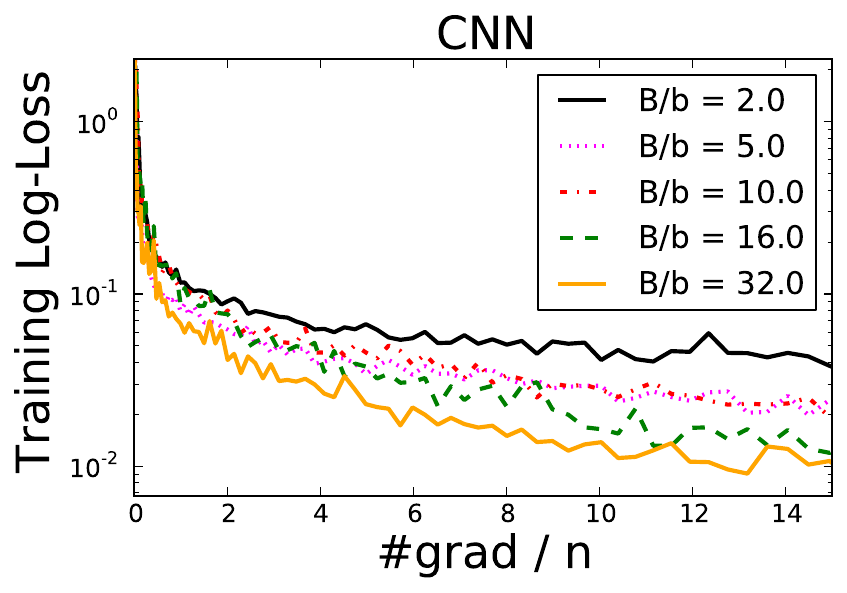}
        \includegraphics[width=0.49\textwidth]{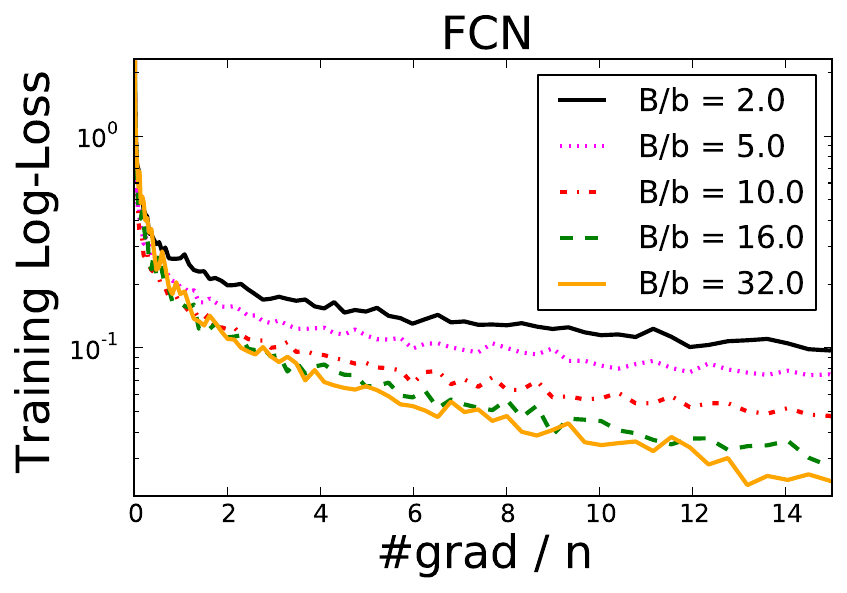}
        \caption{SCSG with different $\Bj / \bj$}\label{fig:res3}
      \end{subfigure}
\end{figure}

\section{Discussion}
We have presented the SCSG method for smooth, non-convex, finite-sum optimization problems. SCSG is the first algorithm that achieves a uniformly better rate than SGD and is never worse than SVRG-type algorithms. When the target accuracy is low, SCSG significantly outperforms the SVRG-type algorithms. Unlike various other variants of SVRG, SCSG is clean in terms of both implementation and analysis. Empirically, SCSG outperforms SGD in the training of multi-layer neural networks.

Although we only consider the finite-sum objective in this paper, it is straightforward to extend SCSG to the general stochastic optimization problems where the objective can be written as $\E_{\xi\sim F} f(x; \xi)$: at the beginning of $j$-th epoch a batch of i.i.d. sample $(\xi_{1}, \ldots, \xi_{B_{j}})$ is drawn from the distribution $F$ and 
\[g_{j} = \frac{1}{B_{j}}\sum_{i=1}^{B_{j}}\nabla f(\td{x}_{j-1}; \xi_{i})\quad \mbox{(see line 3 of Algorithm \ref{algo:SCSG})};\] 
at the $k$-th step, a fresh sample $(\td{\xi}_{1}^{(k)}, \ldots, \td{\xi}_{b_{j}}^{(k)})$ is drawn from the distribution $F$ and 
\[\nu_{k-1}^{(j)} = \frac{1}{b_{j}}\sum_{i=1}^{b_{j}}\nabla f(x_{k-1}^{(j)}; \td{\xi}_{i}^{(k)}) - \frac{1}{b_{j}}\sum_{i=1}^{b_{j}}\nabla f(x_{0}^{(j)}; \td{\xi}_{i}^{(k)}) + g_{j}\quad \mbox{(see line 8 of Algorithm \ref{algo:SCSG})}.\]
Our proof directly carries over to this case, by simply suppressing the term $I(B_{j} < n)$, and yields the bound $\td{O}(\eps^{-5/3})$ for smooth non-convex objectives and the bound $\td{O}(\mu^{-1}\eps^{-1}\wedge \mu^{-5/3}\eps^{-2/3})$ for P-L objectives. These bounds are simply obtained by setting $n = \infty$ in our convergence analysis.

Compared to momentum-based methods \cite{momentum} and methods with adaptive stepsizes \cite{Adagrad, adam}, the mechanism whereby SCSG achieves acceleration is qualitatively different: while momentum aims at balancing primal and dual gaps \cite{zhu14}, adaptive stepsizes aim at balancing the scale of each coordinate, and variance reduction aims at removing the noise. We believe that an algorithm that combines these three techniques is worthy of further study, especially in the training of deep neural networks where the target accuracy is moderate. 

\section*{Acknowledgments}
The authors thank Zeyuan Allen-Zhu, Chi Jin, Nilesh Tripuraneni, Yi Xu, Tianbao Yang, Shenyi Zhao and anonymous reviewers for helpful discussions.

\bibliography{SCSG}
\bibliographystyle{plain}

\newpage
\appendix

\section{Technical Lemmas}\label{app:lemmas}
In this section we present several technical lemmas that facilitate the proofs of our main results. 

We start with a lemma on the variance of the sample mean (without replacement). 
\begin{lemma}\label{lem:var_sampling}
Let $x_{1}, \ldots, x_{M}\in \R^{d}$ be an arbitrary population of $N$ vectors with 
\[\sum_{j=1}^{M}x_{j} = 0.\]
Further let $\mathcal{J}$ be a uniform random subset of $\{1, \ldots, M\}$ with size $m$. Then 
\[\E \left\|\frac{1}{m}\sum_{j\in \mathcal{J}}x_{j}\right\|^{2} = \frac{M - m}{(M - 1)m}\cdot \frac{1}{M}\sum_{j=1}^{M}\|x_{j}\|^{2}\le \frac{I(m < M)}{m}\cdot \frac{1}{M}\sum_{j=1}^{M}\|x_{j}\|^{2}.\]
\end{lemma}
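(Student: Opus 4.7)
The plan is to compute the second moment directly by expanding the squared norm and using simple symmetry facts about uniform random subsets of fixed size. I will write $\xi_j = I(j \in \mathcal{J})$, so that $\frac{1}{m}\sum_{j \in \mathcal{J}} x_j = \frac{1}{m}\sum_{j=1}^{M}\xi_j x_j$. Since $\mathcal{J}$ is uniform over all $m$-subsets of $\{1,\ldots,M\}$, the marginals are $\E \xi_j = m/M$ and the pairwise joint probabilities are $\E \xi_j \xi_k = m(m-1)/(M(M-1))$ for $j\neq k$.

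Expanding the squared norm,
\[
\E\left\|\frac{1}{m}\sum_{j \in \mathcal{J}} x_j\right\|^2 = \frac{1}{m^2}\sum_{j=1}^{M}\E\xi_j \|x_j\|^2 + \frac{1}{m^2}\sum_{j \neq k}\E(\xi_j \xi_k)\, \la x_j, x_k\ra.
\]
Substituting the two moments above gives an expression involving $\sum_j \|x_j\|^2$ and $\sum_{j\neq k}\la x_j, x_k\ra$. The key step is then to eliminate the cross-term sum using the hypothesis $\sum_j x_j = 0$: indeed
\[
0 = \left\|\sum_{j=1}^{M} x_j\right\|^2 = \sum_{j=1}^{M}\|x_j\|^2 + \sum_{j\neq k}\la x_j, x_k\ra,
\]
so $\sum_{j\neq k}\la x_j, x_k\ra = -\sum_j \|x_j\|^2$. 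Plugging this back in and simplifying algebraically (combining the $m/M$ and $m(m-1)/(M(M-1))$ coefficients over the common denominator $M(M-1)$) yields exactly $\frac{M-m}{m(M-1)}\cdot \frac{1}{M}\sum_j \|x_j\|^2$, which is the claimed equality.

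For the inequality, note that when $m = M$ we have $\mathcal{J} = \{1,\ldots,M\}$ and $\frac{1}{m}\sum_{j\in\mathcal{J}}x_j = 0$ by hypothesis, matching $I(m<M) = 0$ on the right. When $m < M$, we simply observe $\frac{M-m}{M-1} \le 1$, so $\frac{M-m}{m(M-1)} \le \frac{1}{m} = \frac{I(m<M)}{m}$. There is no real obstacle here; the only mildly delicate point is being careful with the $M = m$ edge case, which is why the indicator $I(m<M)$ appears in the stated bound.
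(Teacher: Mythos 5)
Your proof is correct and follows essentially the same route as the paper's: introduce the membership indicators, use $\E\xi_j = m/M$ and $\E\xi_j\xi_k = m(m-1)/(M(M-1))$, expand the squared norm, and eliminate the cross-term via $\sum_j x_j = 0$. The only cosmetic difference is that you substitute $\sum_{j\neq k}\langle x_j,x_k\rangle = -\sum_j\|x_j\|^2$ directly, whereas the paper folds the cross-term into $\|\sum_j x_j\|^2$ before setting it to zero --- algebraically identical.
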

\begin{proof}
Let $W_{j} = I(j\in \mathcal{J})$, then it is easy to see that
\begin{equation}
  \label{eq:cov_wj}
  \E W_{j}^{2} = \E W_{j} = \frac{m}{M}, \quad \E W_{j}W_{j'} = \frac{m(m - 1)}{M(M - 1)}.
\end{equation}
Then the sample mean can be rewritten as
\[\frac{1}{m}\sum_{j\in \mathcal{J}}x_{j} = \frac{1}{m}\sum_{i=1}^{n}W_{j}x_{j}.\]
This implies that 
\begin{align*}
\E \bigg\|\frac{1}{m}\sum_{j\in \mathcal{J}}x_{j}\bigg\|^{2} &= \frac{1}{m^{2}}\lb\sum_{j=1}^{M}\E W_{j}^{2}\|x_{j}\|^{2} + \sum_{j\not= j'}\E W_{j}W_{j'} \la x_{j}, x_{j'}\ra\rb\\
& = \frac{1}{m^{2}}\lb\frac{m}{M}\sum_{j=1}^{M}\|x_{j}\|^{2} + \frac{m(m - 1)}{M(M - 1)}\sum_{j\not= j'} \la x_{j}, x_{j'}\ra\rb\\
& = \frac{1}{m^{2}}\lb\lb\frac{m}{M} - \frac{m(m - 1)}{M(M - 1)}\rb\sum_{j=1}^{M}\|x_{j}\|^{2} + \frac{m(m - 1)}{M(M - 1)}\left\|\sum_{j=1}^{M}x_{j}\right\|^{2}\rb\\
& = \frac{1}{m^{2}}\lb\frac{m}{M} - \frac{m(m - 1)}{M(M - 1)}\rb\sum_{j=1}^{M}\|x_{j}\|^{2}\\
& = \frac{M - m}{(M - 1)m}\cdot\frac{1}{M}\sum_{j=1}^{M}\|x_{j}\|^{2}.
\end{align*}
\end{proof}

Since the geometric random variable $N_{j}$ plays an important role in our analysis, we recall its key properties below. 
\begin{lemma}\label{lem:geom}
Let $N\sim \mathrm{Geom}(\gamma)$ for $\gamma > 0$. Then for any sequence $D_{0}, D_{1}, \ldots$ with $\E |D_{N}| < \infty$
\[\E (D_{N} - D_{N + 1}) = \lb\frac{1}{\gamma} - 1\rb\lb D_{0} - \E D_{N}\rb.\]
\end{lemma}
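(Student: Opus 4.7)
The plan is a direct computation: expand $\E D_{N}$ and $\E D_{N+1}$ as power series in $\gamma$ using the explicit geometric pmf $P(N=k) = \gamma^{k}(1-\gamma)$, then re-index the second series to express it in terms of the first.

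Concretely, I would first write
\[
\E D_{N} = (1-\gamma)\sum_{k=0}^{\infty}\gamma^{k} D_{k},\qquad
\E D_{N+1} = (1-\gamma)\sum_{k=0}^{\infty}\gamma^{k} D_{k+1}.
\]
Then I would perform the shift $k \mapsto k-1$ in the second series, factoring out one power of $\gamma$, to obtain
\[
\E D_{N+1} = \frac{1-\gamma}{\gamma}\sum_{k=1}^{\infty}\gamma^{k} D_{k} = \frac{1}{\gamma}\Bigl[(1-\gamma)\sum_{k=0}^{\infty}\gamma^{k} D_{k} - (1-\gamma) D_{0}\Bigr] = \frac{1}{\gamma}\bigl(\E D_{N} - (1-\gamma) D_{0}\bigr).
\]
Subtracting this identity from $\E D_{N}$ and collecting terms gives $\E D_{N}-\E D_{N+1} = \tfrac{1-\gamma}{\gamma}(D_{0} - \E D_{N})$, and noting that $\tfrac{1-\gamma}{\gamma}=\tfrac{1}{\gamma}-1$ yields the claimed identity.

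There is essentially no obstacle; the only subtle point is that the manipulation requires the two series $\sum\gamma^{k}|D_{k}|$ and $\sum\gamma^{k}|D_{k+1}|$ to converge so that rearrangement is legitimate, i.e.\ both $\E D_{N}$ and $\E D_{N+1}$ are well defined. I would state this as a standing hypothesis (it is automatic in the applications in the paper, where $D_{k}$ arises as a deterministic bound on iterate-dependent quantities and $\gamma<1$). Under that assumption the re-indexing and subtraction are justified, and the lemma follows in two lines.
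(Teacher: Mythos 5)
Your proof is correct and is essentially the same re-indexing/shift argument as the paper's: the paper expands $\E(D_N - D_{N+1})$ as a single series and re-indexes in place, whereas you compute $\E D_N$ and $\E D_{N+1}$ separately and re-index the latter before subtracting, but the underlying manipulation is identical. Your explicit remark that absolute convergence of $\sum \gamma^k D_k$ is needed to justify the rearrangement is a small but worthwhile addition that the paper leaves implicit.
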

\begin{proof}
By definition, 
  \begin{align*}
   &\E (D_{N} - D_{N + 1})  = \sum_{n\ge 0}(D_{n} - D_{n+1})\cdot \gamma^{n}(1 - \gamma)\\ 
= &(1 - \gamma)\lb D_{0} - \sum_{n\ge 1}D_{n}(\gamma^{n-1} - \gamma^{n})\rb = (1 - \gamma)\lb \frac{1}{\gamma}D_{0} - \sum_{n\ge 0}D_{n}(\gamma^{n-1} - \gamma^{n})\rb\\ 
= & (1 - \gamma)\lb \frac{1}{\gamma}D_{0} - \frac{1}{\gamma}\sum_{n\ge 0}D_{n}\gamma^{n}(1 - \gamma)\rb = \lb\frac{1}{\gamma} - 1\rb (D_{0} - \E D_{N}),
  \end{align*}
where the last equality is implied by the condition that $\E |D_{N}| < \infty$.
\end{proof}

\begin{lemma}\label{lem:log}
  For any $\eta > 1$ and $z > 0$, define $g_{\eta}(x)$ and $x(z)$ as 
\[g_{\eta}(x) = \frac{1 + \log x}{x^{\eta}}, \quad x(z) = z^{-\frac{1}{\eta}}\cdot \lb \frac{2}{\eta}\log \frac{1}{z}\vee 2\rb^{\frac{1}{\eta - 1}}.\]
Then 
\[g_{\eta}(x) \le z \quad \forall x \ge x(z).\]
\end{lemma}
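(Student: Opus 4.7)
The plan is to reduce the claim to a single-point inequality at $x(z)$ and then extend via monotonicity.

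First I would compute
\[
g_\eta'(x) = \frac{1 - \eta(1 + \log x)}{x^{\eta+1}},
\]
so $g_\eta$ attains its maximum at $x^{*} = e^{1/\eta - 1}$, with maximum value $g_\eta(x^{*}) = e^{\eta - 1}/\eta$, and is strictly decreasing on $[x^{*}, \infty)$. Next I would split on $z$: if $z \ge e^{\eta-1}/\eta$, the claim is trivial because $g_\eta(x) \le g_\eta(x^{*}) \le z$ for every $x$. Otherwise ($z < e^{\eta-1}/\eta$), I would verify that $x(z) \ge x^{*}$, using $x(z) \ge z^{-1/\eta} \cdot 2^{1/(\eta-1)} > (e^{\eta-1}/\eta)^{-1/\eta} \cdot 2^{1/(\eta-1)} = \eta^{1/\eta} \, e^{1/\eta - 1} \, 2^{1/(\eta-1)} \ge e^{1/\eta - 1} = x^{*}$. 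Hence $g_\eta$ is decreasing on $[x(z),\infty)$ and it is enough to establish $g_\eta(x(z)) \le z$.

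For the point inequality, write $c = \bigl( (2/\eta)\log(1/z) \vee 2 \bigr)^{1/(\eta-1)}$ and $v = (1/\eta)\log(1/z)$, so $x(z) = z^{-1/\eta} c$, and in particular $x(z)^{\eta} = z^{-1} c^{\eta}$ and $c^{\eta-1} = 2v \vee 2 \ge 2$. Plugging in,
\[
g_\eta(x(z)) = \frac{1 + (1/\eta)\log(1/z) + \log c}{z^{-1}c^{\eta}} = z \cdot \frac{1 + v + \log c}{c \cdot (2v \vee 2)},
\]
so the bound $g_\eta(x(z)) \le z$ reduces to $1 + v + \log c \le c(2v \vee 2)$.

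Finally I would dispose of this inequality by a two-case analysis. In Case 1, $v \ge 1$ (equivalently $z \le e^{-\eta}$): since $c \ge 1$, I bound $c(2v) - 1 - v - \log c = v(2c-1) - 1 - \log c \ge (2c - 1) - 1 - \log c = 2c - 2 - \log c$, which is nonnegative on $c \ge 1$ (it vanishes at $c = 1$ and has derivative $2 - 1/c \ge 1$). In Case 2, $v < 1$: I use $1 + v < 2 \le 2c$ and again $2c - 1 - v - \log c \ge 2c - 2 - \log c \ge 0$. Both cases give the required inequality.

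The only genuine obstacle is matching the somewhat opaque definition of $x(z)$ to the algebraic inequality; the monotonicity argument and the final elementary bound $2c - 2 - \log c \ge 0$ for $c \ge 1$ are routine, so I would expect the verification that $x(z) \ge x^{*}$ in the nontrivial regime to be the most delicate step, handled by the explicit numerical comparison above.
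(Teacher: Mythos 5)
Your proof is correct, but it takes a different route from the paper's. The paper works directly with an arbitrary $x \ge x(z)$: setting $\alpha = x z^{1/\eta}$, it rewrites $g_\eta(x) = z\,\frac{1 + \log\alpha + \frac{1}{\eta}\log\frac{1}{z}}{\alpha^\eta}$, then applies the elementary bound $a + b \le 2a(b \vee 1)$ (valid for $a \ge 1$) with $a = 1 + \log\alpha$ and $b = \frac{1}{\eta}\log\frac{1}{z}$, followed by $1 + \log\alpha \le \alpha$, and finally uses the defining lower bound $\alpha^{\eta-1} \ge \frac{2}{\eta}\log\frac{1}{z} \vee 2$ to kill the remaining factor. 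This never locates the critical point of $g_\eta$ and never splits on the size of $z$; the inequality is forced simultaneously for every $x \ge x(z)$. Your argument instead finds $x^* = e^{1/\eta - 1}$ where $g_\eta$ peaks, disposes of $z \ge g_\eta(x^*)$ trivially, verifies $x(z) \ge x^*$ in the remaining regime so that monotonicity reduces everything to the single evaluation $g_\eta(x(z)) \le z$, and finishes with a clean two-case check built on $2c - 2 - \log c \ge 0$ for $c \ge 1$. Both proofs are sound; yours trades a little extra bookkeeping (the critical-point computation and the $x(z) \ge x^*$ verification, which you handle via $\eta^{1/\eta} 2^{1/(\eta-1)} \ge 1$) for a more transparent reduction to a one-point inequality, whereas the paper's is shorter but rests on spotting the $a + b \le 2a(b\vee 1)$ trick. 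Every step in your write-up checks out, including the $v < 0$ subcase of Case 2 which is implicitly covered since $-v > -1$ only needs $v < 1$.
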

\begin{proof}
For any $x \ge x(z)$, denote $\alpha = x/z^{-\frac{1}{\eta}}$. Then 
\[\alpha \ge \lb \frac{2}{\eta}\log \frac{1}{z}\vee 2\rb^{\frac{1}{\eta - 1}}\ge 1\]
and 
\[g_{\eta}(x) = \frac{1 + \log \alpha + \frac{1}{\eta}\log \frac{1}{z}}{\alpha^{\eta}z^{-1}}\le z\cdot\frac{2(1 + \log \alpha)\lb\frac{1}{\eta}\log \frac{1}{z}\vee 1\rb}{\alpha^{\eta}}.\]
Taking the logarithm of both sides, we obtain that 
\begin{align*}
\log g_{\eta}(x)  -\log z&\le \log (1 + \log \alpha) - \eta\log \alpha + \log \lb \frac{2}{\eta}\log \frac{1}{z}\vee 2\rb \\
& \le \log \lb \frac{2}{\eta}\log \frac{1}{z}\vee 2\rb - (\eta - 1)\log \alpha\\
& \le 0.
\end{align*}
\end{proof}

\section{One-Epoch Analysis}\label{app:one_epoch}
In order to apply Lemma \ref{lem:geom} on the sequence $\{D_{n}\}$, we need $\E |D_{N}| < \infty$. The following lemma justifies this property for several sequences that are involved in our later proofs. Its proof is distracting and relegated to the end of this section. 

\begin{lemma}\label{lem:geom_finite}
Assume that $\etaj L \le 1 / 4\Bj^{2/3}$. Then for any $j$, 
\[\E \|\td{x}_{j} - \td{x}_{j-1}\|^{2} < \infty, \quad \E (f(\td{x}_{j}) - f^{*}) < \infty, \quad \E \|\nabla f(\td{x}_{j})\|^{2} < \infty, \]
and
\[\E |\la \ej, \td{x}_{j} - \td{x}_{j-1}\ra| < \infty, \quad \E |\la \ej, \nabla f(\td{x}_{j})\ra| < \infty\]
\end{lemma}

As in the standard analysis of stochastic gradient methods, we start by establishing a bound on $\E_{\sI_{k}}\|\nuj_{k}\|^{2}$ and $\E_{\I_{j}}\|\ej\|^{2}$.
\begin{lemma}\label{lem:nuknorm}
Under Assumption \textbf{A}1,
\[\E_{\sI_{k}}\|\nuj_{k}\|^{2}\le \frac{L^{2}}{\bj}\|\xj_{k} - \xj_{0}\|^{2} + 2\|\nabla f(\xj_{k})\|^{2} + 2\|\ej\|^{2}.\]
\end{lemma}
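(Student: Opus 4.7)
The plan is to decompose $\nuj_{k}$ into a mean-zero fluctuation piece plus a ``bias'' piece, and then handle the two pieces separately. First I would write
\[
\nuj_{k} = \underbrace{\bigl(\nabla f_{\sI_{k}}(\xj_{k}) - \nabla f_{\sI_{k}}(\xj_{0})\bigr) - \bigl(\nabla f(\xj_{k}) - \nabla f(\xj_{0})\bigr)}_{=:Z_{k}} + \nabla f(\xj_{k}) + \ej,
\]
using the identity $\gj = \nabla f(\xj_{0}) + \ej$ (which follows from the definitions of $\gj$ and $\ej$, since $\xj_{0} = \td{x}_{j-1}$). The point is that $\E_{\sI_{k}} Z_{k} = 0$, while the remaining term $\nabla f(\xj_{k}) + \ej$ is deterministic given the filtration before step $k$.

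Second, I would use the cross-term cancellation $\E_{\sI_{k}}\la Z_{k}, \nabla f(\xj_{k}) + \ej\ra = 0$ to get
\[
\E_{\sI_{k}} \|\nuj_{k}\|^{2} = \E_{\sI_{k}} \|Z_{k}\|^{2} + \|\nabla f(\xj_{k}) + \ej\|^{2}
\le \E_{\sI_{k}} \|Z_{k}\|^{2} + 2\|\nabla f(\xj_{k})\|^{2} + 2\|\ej\|^{2},
\]
where the inequality $\|a+b\|^{2}\le 2\|a\|^{2}+2\|b\|^{2}$ finishes off the ``easy'' piece and already produces two of the three terms in the claim.

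Third, to bound $\E_{\sI_{k}}\|Z_{k}\|^{2}$ I would invoke Lemma \ref{lem:var_sampling} with the centered population $v_{i} = \bigl(\nabla f_{i}(\xj_{k}) - \nabla f_{i}(\xj_{0})\bigr) - \bigl(\nabla f(\xj_{k}) - \nabla f(\xj_{0})\bigr)$, $i = 1,\ldots,n$, which satisfies $\sum_{i=1}^{n} v_{i} = 0$ and has sample mean over $\sI_{k}$ equal to $Z_{k}$. The lemma gives
\[
\E_{\sI_{k}}\|Z_{k}\|^{2} \le \frac{1}{\bj}\cdot\frac{1}{n}\sum_{i=1}^{n}\|v_{i}\|^{2} \le \frac{1}{\bj}\cdot\frac{1}{n}\sum_{i=1}^{n}\|\nabla f_{i}(\xj_{k}) - \nabla f_{i}(\xj_{0})\|^{2},
\]
where the second step uses the elementary fact that subtracting the mean only decreases the average squared norm. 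Finally, Assumption \textbf{A}1 gives $\|\nabla f_{i}(\xj_{k}) - \nabla f_{i}(\xj_{0})\| \le L\|\xj_{k} - \xj_{0}\|$ for every $i$, so the last display is bounded by $\tfrac{L^{2}}{\bj}\|\xj_{k} - \xj_{0}\|^{2}$, which combined with the previous step yields the claim.

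The only point that requires a little care is the mean-zero identity for $Z_{k}$ under sampling without replacement and the matching application of Lemma \ref{lem:var_sampling}; everything else is routine. I do not anticipate a serious obstacle here, since the centered sampling lemma is tailor-made for this computation and the Lipschitz bound on $\nabla f_{i}$ converts the population variance into the desired $\|\xj_{k} - \xj_{0}\|^{2}$ term directly.
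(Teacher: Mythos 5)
Your proof is correct and follows essentially the same route as the paper's: the bias-variance identity $\E\|Z\|^{2}=\E\|Z-\E Z\|^{2}+\|\E Z\|^{2}$ (which you phrase as a decomposition plus cross-term cancellation), then Lemma~\ref{lem:var_sampling} applied to the centered population $v_{i}$, the drop of the negative centering term, and Assumption~\textbf{A}1 to close. No discrepancies.
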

\begin{proof}
  Using the fact that $\E \|Z\|^{2} = \E \|Z - \E Z\|^{2} + \|\E Z\|^{2}$ (for any random variable $Z$), we have
  \begin{align*}
& \E_{\sI_{k}}\|\nuj_{k}\|^{2}  = \E_{\sI_{k}}\|\nuj_{k} - \E_{\sI_{k}}\nuj_{k}\|^{2} + \|\E_{\sI_{k}}\nuj_{k}\|^{2}\\
 = & \E_{\sI_{k}}\|\nabla f_{\sI_{k}}(\xj_{k}) - \nabla f_{\sI_{k}}(\xj_{0}) - (\nabla f(\xj_{k}) - \nabla f(\xj_{0}))\|^{2} + \|\nabla f(\xj_{k}) + \ej\|^{2}\\
\le & \E_{\sI_{k}}\|\nabla f_{\sI_{k}}(\xj_{k}) - \nabla f_{\sI_{k}}(\xj_{0}) - (\nabla f(\xj_{k}) - \nabla f(\xj_{0}))\|^{2} + 2\|\nabla f(\xj_{k})\|^{2} + 2\|\ej\|^{2}.
\end{align*}
By Lemma \ref{lem:var_sampling},
\begin{align*}
  & \E_{\sI_{k}}\|\nabla f_{\sI_{k}}(\xj_{k}) - \nabla f_{\sI_{k}}(\xj_{0}) - (\nabla f(\xj_{k}) - \nabla f(\xj_{0}))\|^{2}\\
\le &\frac{1}{\bj}\cdot \frac{1}{n}\sum_{i=1}^{n}\|\nabla f_{i}(\xj_{k}) - \nabla f_{i}(\xj_{0}) - (\nabla f(\xj_{k}) - \nabla f(\xj_{0}))\|^{2} \\
= & \frac{1}{\bj}\cdot \lb\frac{1}{n}\sum_{i=1}^{n}\|\nabla f_{i}(\xj_{k}) - \nabla f_{i}(\xj_{0})\|^{2} - \|(\nabla f(\xj_{k}) - \nabla f(\xj_{0}))\|^{2}\rb \\
\le & \frac{1}{\bj}\cdot \frac{1}{n}\sum_{i=1}^{n}\|\nabla f_{i}(\xj_{k}) - \nabla f_{i}(\xj_{0})\|^{2}\\
\le & \frac{1}{\bj}\cdot L^{2}\|\xj_{k} - \xj_{0}\|^{2}
\end{align*}
where the last line uses Assumption \textbf{A}1. Therefore, 
\[\E_{\sI_{k}}\|\nuj_{k}\|^{2}\le \frac{L^{2}}{\bj}\|\xj_{k} - \xj_{0}\|^{2} + 2\|\nabla f(\xj_{k})\|^{2} + 2\|\ej\|^{2}.\]
\end{proof}

\begin{lemma}\label{lem:ejnorm}
  \[\E_{\I_{j}} \|\ej\|^{2} \le \frac{I(\Bj < n)}{\Bj}\cdot \H^{*}.\]
\end{lemma}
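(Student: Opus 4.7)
The plan is to reduce this directly to Lemma \ref{lem:var_sampling} after verifying a trivial centering condition and then invoking the definition of $\H^{*}$.

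First I would handle the degenerate case $B_{j} = n$ separately. When $B_{j} = n$, the ``batch'' $\I_{j}$ is the whole index set, so $\nabla f_{\I_{j}}(\td{x}_{j-1}) = \nabla f(\td{x}_{j-1})$, which gives $\ej = 0$ and the right-hand side is also $0$ since $I(B_{j} < n) = 0$. Thus the bound holds with equality.

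For the main case $B_{j} < n$, I would set $x_{i} = \nabla f_{i}(\td{x}_{j-1}) - \nabla f(\td{x}_{j-1})$ for $i = 1,\ldots,n$. The key observation is that these centered vectors sum to zero, since $\sum_{i=1}^{n}\nabla f_{i}(\td{x}_{j-1}) = n\nabla f(\td{x}_{j-1})$ by the definition of $f$. Writing $\ej = \frac{1}{B_{j}}\sum_{i\in \I_{j}}x_{i}$, Lemma \ref{lem:var_sampling} (applied with $M = n$ and $m = B_{j}$) immediately yields
\[
\E_{\I_{j}}\|\ej\|^{2} \le \frac{I(B_{j} < n)}{B_{j}}\cdot\frac{1}{n}\sum_{i=1}^{n}\|x_{i}\|^{2}
= \frac{I(B_{j} < n)}{B_{j}}\cdot\frac{1}{n}\sum_{i=1}^{n}\|\nabla f_{i}(\td{x}_{j-1}) - \nabla f(\td{x}_{j-1})\|^{2}.
\]
Finally, the definition of $\H^{*}$ in \eqref{eq:Hstar} takes a supremum over $x$, so in particular the inner sum evaluated at $x = \td{x}_{j-1}$ is bounded by $\H^{*}$, completing the proof.

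There is no real obstacle here; the lemma is a direct corollary of Lemma \ref{lem:var_sampling} once one notices the centering. The only subtle point worth flagging is that the bound must hold pointwise for the fixed (possibly random) iterate $\td{x}_{j-1}$, which is why the supremum-over-$x$ definition of $\H^{*}$ is needed rather than an expectation-based variance bound; in particular, the argument does not require $\td{x}_{j-1}$ to be independent of anything.
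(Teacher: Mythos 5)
Your main argument is exactly the paper's: condition on $\td{x}_{j-1}$, observe the centering $\sum_{i=1}^{n}\bigl(\nabla f_{i}(\td{x}_{j-1}) - \nabla f(\td{x}_{j-1})\bigr) = 0$, apply Lemma \ref{lem:var_sampling}, and then bound the resulting empirical variance by $\H^{*}$. Splitting off the case $B_{j} = n$ is harmless but unnecessary, since the $I(m < M)$ factor in Lemma \ref{lem:var_sampling} already makes both sides vanish.

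However, your closing remark that ``the argument does not require $\td{x}_{j-1}$ to be independent of anything'' is mistaken, and it is precisely the point the paper's proof flags. To invoke Lemma \ref{lem:var_sampling} you must fix the population $x_{i} = \nabla f_{i}(\td{x}_{j-1}) - \nabla f(\td{x}_{j-1})$, which depends on $\td{x}_{j-1}$, and you need $\I_{j}$ to be a uniformly random $B_{j}$-subset of $\{1,\dots,n\}$ \emph{conditionally on} $\td{x}_{j-1}$. If $\td{x}_{j-1}$ were correlated with $\I_{j}$, that conditional law would no longer be uniform and the lemma would not apply. What the supremum in \eqref{eq:Hstar} buys you is that the conditional bound $\frac{1}{n}\sum_{i}\|x_{i}\|^{2}\le \H^{*}$ is a deterministic constant, so the outer expectation over $\td{x}_{j-1}$ costs nothing; it does not replace the independence requirement. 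In the algorithm $\td{x}_{j-1}$ is measurable with respect to randomness drawn strictly before $\I_{j}$ is sampled, which is exactly why the independence holds and the conditioning step is licit.
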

\begin{proof}
Since $\td{x}_{j-1}$ is independent of $\I_{j}$, conditioning on $\td{x}_{j-1}$ and applying Lemma \ref{lem:var_sampling}, we have
\begin{align*}
\E_{\I_{j}} \|\ej\|^{2} &= \frac{n - \Bj}{(n - 1)\Bj}\cdot \frac{1}{n}\sum_{i=1}^{n}\|\nabla f_{i}(\td{x}_{j-1}) - \nabla f(\td{x}_{j-1})\|^{2}\\
&\le \frac{n - \Bj}{(n - 1)\Bj}\cdot \H^{*}\le \frac{I(\Bj < n)}{\Bj}\cdot \H^{*}
\end{align*}
\end{proof}

Based on Lemma \ref{lem:geom}, Lemma \ref{lem:geom_finite}, Lemma \ref{lem:nuknorm} and Lemma \ref{lem:ejnorm}, we can derive bounds for primal and dual gaps respectively. 

\begin{lemma}\label{lem:primal}
Suppose $\eta_{j}L < 1$, then under Assumption \textbf{A}1,
\begin{align}
&\etaj\Bj(1 - \etaj L) \E\|\nabla f(\td{x}_{j})\|^{2} + \etaj\Bj\E \la\ej, \nabla f(\td{x}_{j})\ra\nonumber\\
\le &\bj\E\lb f(\td{x}_{j-1}) - f(\td{x}_{j})\rb + \frac{L^{3}\etaj^{2}\Bj}{2\bj}\E\|\td{x}_{j} - \td{x}_{j-1}\|^{2} + L\etaj^{2}\Bj\E \|\ej\|^{2}.\label{eq:primal}
\end{align}
where $\E$ denotes the expectation with respect to all randomness.
\end{lemma}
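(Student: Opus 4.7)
The plan is to derive a one-step descent inequality from Assumption \textbf{A}1 and Lemma \ref{lem:nuknorm}, and then to sum across the random number of inner iterations using Lemma \ref{lem:geom}. The coefficient $\Bj$ in the final bound will appear through the factor $1/\gamma - 1 = \bj / \Bj$ that Lemma \ref{lem:geom} produces, after multiplying both sides by $\Bj$.

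\emph{Step 1: One-step descent.} Apply the right-hand inequality of \eqref{eq:A1} to $f$ with $x = \xj_{k+1}$ and $y = \xj_{k}$, together with the update $\xj_{k+1} - \xj_k = -\etaj \nuj_k$, to obtain
\[f(\xj_{k+1}) \le f(\xj_k) - \etaj\la\nabla f(\xj_k), \nuj_k\ra + \frac{L\etaj^2}{2}\|\nuj_k\|^2.\]

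\emph{Step 2: Conditional expectation inside the inner loop.} Condition on the sigma field generated by $\Ij$, $\xj_0$, and $\sI_0, \ldots, \sI_{k-1}$; the corresponding conditional expectation satisfies $\E[\nuj_k] = \nabla f(\xj_k) + \ej$, and $\E\|\nuj_k\|^2$ is controlled by Lemma \ref{lem:nuknorm}. Substituting both bounds and grouping the $\|\nabla f(\xj_k)\|^2$ terms gives
\begin{align*}
\E f(\xj_{k+1}) &\le f(\xj_k) - \etaj(1 - L\etaj)\|\nabla f(\xj_k)\|^2 - \etaj\la\ej, \nabla f(\xj_k)\ra\\
&\quad + \frac{L^3\etaj^2}{2\bj}\|\xj_k - \xj_0\|^2 + L\etaj^2\|\ej\|^2.
\end{align*}
Rearranging and then averaging over all of the inner-loop randomness, holding $\Ij$ and $\td{x}_{j-1}$ fixed, yields for every $k \ge 0$
\begin{align*}
\etaj(1 - L\etaj)\Ej\|\nabla f(\xj_k)\|^2 + \etaj \Ej\la\ej, \nabla f(\xj_k)\ra &\le \Ej f(\xj_k) - \Ej f(\xj_{k+1}) \\
&\quad + \frac{L^3\etaj^2}{2\bj}\Ej\|\xj_k - \xj_0\|^2 + L\etaj^2\|\ej\|^2,
\end{align*}
where $\Ej$ denotes expectation conditional on $(\Ij, \td{x}_{j-1})$, so that $\|\ej\|^2$ is a constant under this conditioning.

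\emph{Step 3: Telescope through the geometric index.} Since $\Nj$ is drawn independently of the inner-loop iterates, we may take expectation in $\Nj$ of the inequality above and note that $\xj_{\Nj} = \td{x}_j$. For the telescoping difference, apply Lemma \ref{lem:geom} to the deterministic (given the conditioning) sequence $D_k = \Ej f(\xj_k)$ with $\gamma = \Bj/(\Bj + \bj)$, which gives
\[\E_{\Nj}\bigl[\Ej f(\xj_{\Nj}) - \Ej f(\xj_{\Nj+1})\bigr] = \frac{\bj}{\Bj}\bigl(f(\td{x}_{j-1}) - \Ej f(\td{x}_j)\bigr).\]
Substituting this into the averaged inequality, then taking the remaining outer expectation and multiplying both sides by $\Bj$, produces exactly \eqref{eq:primal}.

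\emph{Main obstacle.} The delicate point is Step 3: one must carefully separate the randomness of $\Nj$ from the inner-loop randomness to invoke Lemma \ref{lem:geom} through the tower property, and correctly identify $\xj_{\Nj} = \td{x}_j$ and $\xj_0 = \td{x}_{j-1}$ in each averaged term (including the quadratic one $\|\xj_k - \xj_0\|^2$, which becomes $\E\|\td{x}_j - \td{x}_{j-1}\|^2$). The rest is a routine descent-lemma computation combined with the mini-batch variance bound from Lemma \ref{lem:nuknorm}. The condition $\etaj L < 1$ is only used to ensure the coefficient $(1 - \etaj L)$ on the leading $\|\nabla f(\td{x}_j)\|^2$ term is positive, which will matter when this lemma is combined with a bound on the dual gap $\E\la\ej, \nabla f(\td{x}_j)\ra$ to prove Theorem \ref{thm:one_epoch}.
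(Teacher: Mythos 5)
Your proposal is correct and follows essentially the same route as the paper's own proof: a descent inequality from A1 with the update $\xj_{k+1} = \xj_k - \etaj\nuj_k$, the conditional mean $\E_{\sI_k}\nuj_k = \nabla f(\xj_k) + \ej$ together with Lemma~\ref{lem:nuknorm} to control $\E_{\sI_k}\|\nuj_k\|^2$, and then the telescoping identity of Lemma~\ref{lem:geom} applied at the random index $\Nj$ before multiplying through by $\Bj$. The only minor imprecision is notational: after applying Lemma~\ref{lem:geom} the right-hand side should read $\frac{\bj}{\Bj}\bigl(f(\td{x}_{j-1}) - \E_{\Nj}\Ej f(\xj_{\Nj})\bigr)$, since $\Ej$ as defined fixes $\Nj$; you write $\Ej f(\td{x}_j)$, implicitly absorbing the expectation over $\Nj$ into $\Ej$, which is harmless here but should be stated.
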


\begin{proof}
By \eqref{eq:A1},
\begin{align*}
&\E_{\sI_{k}}f(\xj_{k+1})\le f(\xj_{k}) -\etaj \la \E_{\sI_{k}}\nu_{k}, \nabla f(\xj_{k})\ra + \frac{L\etaj^{2}}{2}\E_{\sI_{k}}\|\nu_{k}\|^{2}\\
=&  f(\xj_{k}) -\etaj \|\nabla f(\xj_{k})\|^{2} - \etaj \la\ej, \nabla f(\xj_{k})\ra + \frac{L\etaj^{2}}{2}\E_{\sI_{k}}\|\nu_{k}\|^{2}\\
\le &f(\xj_{k}) - \etaj(1 - \etaj L) \|\nabla f(\xj_{k})\|^{2}- \etaj \la\ej, \nabla f(\xj_{k})\ra\\
&\quad  + \frac{L^{3}\etaj^{2}}{2\bj}\|\xj_{k} - \xj_{0}\|^{2} + L\etaj^{2}\|\ej\|^{2}  \quad (\mbox{Lemma \ref{lem:nuknorm}}).
% (\mbox{Lemma \ref{lem:ejnorm}})\le &f(\xj_{k}) - \etaj(1 - L\etaj) \|\nabla f(\xj_{k})\|^{2}- \etaj \la\ej, \nabla f(\xj_{k})\ra\\
% &\quad  + \frac{L^{3}\etaj^{2}}{2}\|\xj_{k} - \xj_{0}\|^{2} + L\etaj^{2}\cdot \frac{\H^{*}}{\Bj}.
\end{align*}
Let $\Ej$ denotes the expectation over $\sI_{0}, \sI_{1}, \ldots$, given $\Nj$. Note that $\Ej$ is equivalent to the expectation over $\sI_{0}, \sI_{1}, \ldots$ as $\Nj$ is independent of them. Since $\sI_{k+1}, \sI_{k+2}, \ldots$ are independent of $\xj_{k}$, the above inequality implies that 
\begin{align}
& \etaj(1 - \etaj L)\Ej\|\nabla f(\xj_{k})\|^{2} + \etaj\Ej \la\ej, \nabla f(\xj_{k})\ra\\
\le &\Ej f(\xj_{k}) - \Ej f(\xj_{k+1}) + \frac{L^{3}\etaj^{2}}{2\bj}\Ej\|\xj_{k} - \xj_{0}\|^{2} + L\etaj^{2}\|\ej\|^{2}.  \label{eq:primal1}
\end{align}
Let $k = \Nj$ in \eqref{eq:primal1}. By taking expectation with respect to $\Nj$ and using Fubini's theorem, we arrive at 
\begin{align}
&\etaj(1 - \etaj L) \E_{\Nj}\Ej\|\nabla f(\xj_{\Nj})\|^{2} + \etaj\E_{\Nj}\Ej \la\ej, \nabla f(\xj_{\Nj})\ra\nonumber\\
\le &\E_{\Nj}\lb\Ej f(\xj_{\Nj}) - \Ej f(\xj_{\Nj+1})\rb + \frac{L^{3}\etaj^{2}}{2\bj}\E_{\Nj}\Ej\E\|\xj_{\Nj} - \xj_{0}\|^{2} + L\etaj^{2}\|\ej\|^{2}\nonumber\\
 = & \frac{\bj}{\Bj}\lb f(\xj_{0}) - \Ej\E_{\Nj}f(\xj_{\Nj})\rb + \frac{L^{3}\etaj^{2}}{2\bj}\Ej\E_{\Nj}\|\xj_{\Nj} - \xj_{0}\|^{2} + L\etaj^{2}\|\ej\|^{2} \,\,\mbox{(Lemma \ref{lem:geom} and \ref{lem:geom_finite})}. \label{eq:primal2}
\end{align}
The lemma is then proved by substituting $\xj_{\Nj} (\xj_{0})$ by $\td{x}_{j} (\td{x}_{j-1})$, and taking an expectation over all past randomness.
\end{proof}

\begin{lemma}\label{lem:dual}
Suppose $\etaj^{2}L^{2}\Bj < \bj^{2}$, then under Assumption \textbf{A}1, 
\begin{align}
&\lb \bj - \frac{\etaj^{2}L^{2}\Bj}{\bj}\rb\E \|\td{x}_{j} - \td{x}_{j-1}\|^{2} + 2\etaj\Bj\E \la \ej, \td{x}_{j} - \td{x}_{j-1}\ra\nonumber\\
\le &-2\etaj\Bj\E \la\nabla f(\td{x}_{j}), \td{x}_{j} - \td{x}_{j-1}\ra + 2\etaj^{2}\Bj\E \|\nabla f(\td{x}_{j})\|^{2} + 2\etaj^{2}\Bj\E\|\ej\|^{2}.\label{eq:dual}
\end{align}
\end{lemma}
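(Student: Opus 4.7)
}
The plan is to mirror the one-step/telescope-via-Lemma~\ref{lem:geom} strategy used in Lemma \ref{lem:primal}, but applied to the quadratic quantity $D_k := \|\xj_k - \xj_0\|^2$ instead of the function gap. The update rule $\xj_{k+1} = \xj_k - \etaj \nuj_k$ gives the exact identity
\[
D_{k+1} = D_k - 2\etaj \la \xj_k - \xj_0, \nuj_k\ra + \etaj^2 \|\nuj_k\|^2.
\]
Taking conditional expectation over the inner mini-batch $\sI_k$, and using the fact (derived in the one-epoch discussion) that $\E_{\sI_k}\nuj_k = \nabla f(\xj_k) + \ej$ together with the variance bound from Lemma \ref{lem:nuknorm}, I obtain the one-step inequality
\[
\E_{\sI_k}D_{k+1} \le \Big(1 + \tfrac{\etaj^2 L^2}{\bj}\Big)D_k - 2\etaj \la \xj_k - \xj_0, \nabla f(\xj_k) + \ej\ra + 2\etaj^2\|\nabla f(\xj_k)\|^2 + 2\etaj^2\|\ej\|^2.
\]

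Next I would substitute $k = \Nj$ and take expectation over $\Nj \sim \mathrm{Geom}(\Bj/(\Bj+\bj))$ with $\ej$ and $\xj_0 = \td{x}_{j-1}$ held fixed, as in the proof of Lemma \ref{lem:primal}. Since $D_0 = 0$ (because $\xj_0 = \td{x}_{j-1}$), Lemma \ref{lem:geom} collapses the telescoping term to the particularly clean form
\[
\E_{\Nj}D_{\Nj+1} - \E_{\Nj}D_{\Nj} = \tfrac{\bj}{\Bj}\,\E_{\Nj}D_{\Nj}.
\]
Substituting this identity into the conditional one-step inequality and rearranging brings the left-hand side into a form whose coefficient on $\E D_{\Nj}$ is $\bj/\Bj - \etaj^2 L^2/\bj$.

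Finally, I would multiply through by $\Bj$, take expectation with respect to the outer batch $\Ij$, and identify $\xj_{\Nj} = \td{x}_j$ and $\xj_0 = \td{x}_{j-1}$. After isolating the inner product $\la\ej, \td{x}_j - \td{x}_{j-1}\ra$ on the left and moving the $\nabla f(\td{x}_j)$ inner product to the right, the bound becomes exactly \eqref{eq:dual}; the hypothesis $\etaj^2 L^2 \Bj < \bj^2$ is only needed to ensure the leading coefficient $\bj - \etaj^2 L^2 \Bj/\bj$ is positive and so the inequality is meaningful as a bound on $\E\|\td{x}_j - \td{x}_{j-1}\|^2$.

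There is no conceptually hard step here; the main thing to be careful about is the bookkeeping of the nested expectations, in particular making sure that $\ej$ (which depends only on $\Ij$ and $\td{x}_{j-1}$) is treated as a constant with respect to the inner randomness $\sI_0, \sI_1, \ldots$ and $\Nj$, so that $\E_{\Nj}\la \xj_{\Nj}-\xj_0, \ej\ra$ becomes $\la \E_{\Nj}(\xj_{\Nj}-\xj_0), \ej\ra$ and $\|\ej\|^2$ simply passes through every inner expectation. The one non-obvious choice is to apply Lemma \ref{lem:geom} to the sequence $D_k$ itself rather than to $f(\xj_k)$, which works precisely because $D_0 = 0$ and eliminates the $f$-term that appeared in the analog step for Lemma \ref{lem:primal}.
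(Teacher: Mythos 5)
Your proposal is correct and reproduces the paper's own argument step for step: expand $\|\xj_{k+1}-\xj_0\|^2$ via the update rule, bound $\E_{\sI_k}\|\nuj_k\|^2$ by Lemma~\ref{lem:nuknorm}, set $k=\Nj$, apply Lemma~\ref{lem:geom} to the sequence $D_k=\|\xj_k-\xj_0\|^2$ (with $D_0=0$), multiply by $\Bj$, rename $\xj_{\Nj}, \xj_0$ as $\td{x}_j, \td{x}_{j-1}$, and take the outer expectation. The remarks about why $D_0=0$ simplifies the geometric-telescope step relative to Lemma~\ref{lem:primal}, and about the hypothesis $\etaj^2L^2\Bj<\bj^2$ being needed only to keep the leading coefficient positive, are both accurate.
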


\begin{proof}
  Since $\xj_{k+1} = \xj_{k} - \etaj\nuj_{k}$, we have
  \begin{align*}
    & \E_{\sI_{k}}\|\xj_{k+1} - \xj_{0}\|^{2}\\ 
= & \|\xj_{k} - \xj_{0}\|^{2} - 2\etaj\la \E_{\sI_{k}}\nuj_{k}, \xj_{k} - \xj_{0}\ra + \etaj^{2}\E_{\sI_{k}}\|\nuj_{k}\|^{2}\\
= &\|\xj_{k} - \xj_{0}\|^{2} - 2\etaj\la\nabla f(\xj_{k}), \xj_{k} - \xj_{0}\ra - 2\etaj\la \ej, \xj_{k} - \xj_{0}\ra + \etaj^{2}\E_{\sI_{k}}\|\nuj_{k}\|^{2}\\
 \le & \lb 1 + \frac{\etaj^{2}L^{2}}{\bj}\rb\|\xj_{k} - \xj_{0}\|^{2} - 2\etaj\la\nabla f(\xj_{k}), \xj_{k} - \xj_{0}\ra - 2\etaj\la \ej, \xj_{k} - \xj_{0}\ra \\
& \quad + 2\etaj^{2}\|\nabla f(\xj_{k})\|^{2} + 2\etaj^{2}\|\ej\|^{2}. \quad \mbox{(Lemma \ref{lem:nuknorm})}
% \mbox{(Lemma \ref{lem:ejnorm})} \le & (1 + \etaj^{2}L^{2})\|\xj_{k} - \xj_{0}\|^{2} - 2\etaj\la\nabla f(\xj_{k}), \xj_{k} - \xj_{0}\ra - 2\etaj\la \ej, \xj_{k} - \xj_{0}\ra \\
% & \quad + 2\etaj^{2}\|\nabla f(\xj_{k})\|^{2} + \frac{2\etaj^{2}\H^{*}}{\Bj}.
  \end{align*}
Using the same notation $\Ej$ as in the proof of Lemma \ref{lem:primal}, we have 
\begin{align}
&2\etaj\Ej\la\nabla f(\xj_{k}), \xj_{k} - \xj_{0}\ra + 2\etaj \Ej\la\ej, \xj_{k} - \xj_{0}\ra\nonumber\\
\le & \lb 1 + \frac{\etaj^{2}L^{2}}{\bj}\rb\Ej\|\xj_{k} - \xj_{0}\|^{2} - \Ej\|\xj_{k+1} - \xj_{0}\|^{2} + 2\etaj^{2}\|\nabla f(\xj_{k})\|^{2} + 2\etaj^{2}\|\ej\|^{2}.
  \label{eq:dual1}
\end{align}
Let $k = \Nj$ in \eqref{eq:dual1}. By taking expectation with respect to $\Nj$ and using Fubini's theorem, we arrive at 
\begin{align}
&2\etaj\E_{\Nj}\Ej\la\nabla f(\xj_{\Nj}), \xj_{\Nj} - \xj_{0}\ra + 2\etaj\E_{\Nj} \Ej\la\ej, \xj_{\Nj} - \xj_{0}\ra\nonumber\\
\le & \lb 1 + \frac{\etaj^{2}L^{2}}{\bj}\rb\E_{\Nj}\Ej\|\xj_{\Nj} - \xj_{0}\|^{2} - \E_{\Nj}\Ej\|\xj_{\Nj+1} - \xj_{0}\|^{2} + 2\etaj^{2}\E_{\Nj}\|\nabla f(\xj_{\Nj})\|^{2} + 2\etaj^{2}\|\ej\|^{2}\nonumber\\
= & \lb-\frac{\bj}{\Bj} + \frac{\etaj^{2}L^{2}}{\bj}\rb\E_{\Nj}\Ej\|\xj_{\Nj} - \xj_{0}\|^{2} + 2\etaj^{2}\E_{\Nj}\|\nabla f(\xj_{\Nj})\|^{2} + 2\etaj^{2}\|\ej\|^{2}. \,\, \mbox{(Lemma \ref{lem:geom} and \ref{lem:geom_finite})}\label{eq:dual2}
\end{align}
The lemma is then proved by substituting $\xj_{\Nj} (\xj_{0})$ by $\td{x}_{j} (\td{x}_{j-1})$ and taking a further expectation with respect to the past randomness.
\end{proof}

\begin{lemma}\label{lem:Ij}
\begin{equation}\label{eq:Ij}
\bj\E \la\ej, \td{x}_{j} - \td{x}_{j-1}\ra = -\etaj\Bj\E \la\ej, \nabla f(\td{x}_{j})\ra - \etaj\Bj\E\|\ej\|^{2}.
\end{equation}
\end{lemma}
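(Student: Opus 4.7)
The plan is to derive this identity by combining the telescoping structure of the inner loop with Lemma \ref{lem:geom}, in the same spirit as Lemmas \ref{lem:primal} and \ref{lem:dual}. First I would unroll the update rule $\xj_{k+1} = \xj_k - \etaj \nuj_k$ to obtain the representation
\[\td{x}_{j} - \td{x}_{j-1} \;=\; \xj_{N_{j}} - \xj_{0} \;=\; -\etaj \sum_{k=0}^{N_{j}-1} \nuj_{k},\]
and then take an inner product with $\ej$. Defining $D_{k} := \la \ej, \xj_{k} - \xj_{0}\ra$, so that $D_{0}=0$ and $D_{k}-D_{k+1} = \etaj \la \ej, \nuj_{k}\ra$, the sum above rewrites conveniently as a telescoping quantity controlled by $D_{N_j}$.

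Next I would apply Lemma \ref{lem:geom} conditionally on $\I_{j}$ (which determines $\ej$ and $\gj$) and on $\xj_{0} = \td{x}_{j-1}$, with geometric parameter $\gamma = \Bj/(\Bj+\bj)$ so that $\tfrac{1}{\gamma}-1 = \bj/\Bj$. The lemma yields
\[\etaj\, \E \la \ej, \nuj_{N_{j}}\ra \;=\; \frac{\bj}{\Bj}\lb D_{0} - \E D_{N_{j}}\rb \;=\; -\frac{\bj}{\Bj}\,\E\la \ej, \td{x}_{j} - \td{x}_{j-1}\ra,\]
which, after multiplying by $\Bj$ and taking a further outer expectation, gives
\[\etaj\Bj\, \E \la \ej, \nuj_{N_{j}}\ra \;=\; -\bj\, \E \la \ej, \td{x}_{j} - \td{x}_{j-1}\ra.\]

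The last step is to evaluate $\E\la \ej, \nuj_{N_{j}}\ra$ via the tower property. Conditioning on $N_{j}$, on the trajectory up through $\xj_{N_{j}}$, and on $\ej$, the identity already noted in the text gives
\[\E_{\sI_{N_{j}}} \nuj_{N_{j}} \;=\; \nabla f(\xj_{N_{j}}) + \ej \;=\; \nabla f(\td{x}_{j}) + \ej,\]
so that $\E \la \ej, \nuj_{N_{j}}\ra = \E\la \ej, \nabla f(\td{x}_{j})\ra + \E \|\ej\|^{2}$. Substituting into the previous display and rearranging produces exactly \eqref{eq:Ij}.

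The only delicate point I anticipate is the bookkeeping around the index $N_{j}$: I must make sure that Lemma \ref{lem:geom} is applied to the (hypothetical) infinite sequence of independent mini-batch samples $\sI_{0}, \sI_{1}, \ldots$ with $N_{j}$ drawn independently of them, so that the ``$(N_{j}+1)$-th step'' $\nuj_{N_{j}}$ is well-defined and the tower property can be used. Once this is set up cleanly, everything else is a routine substitution, so I do not expect any serious obstacle.
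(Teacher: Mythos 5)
Your proof is correct and follows essentially the same route as the paper: both arguments work with the telescoping quantity $D_k = \la \ej, \xj_k - \xj_0\ra$, invoke Lemma \ref{lem:geom} with $\tfrac{1}{\gamma}-1 = \bj/\Bj$ to pass from the increment at the geometric time $N_j$ to $\E D_{N_j}$, and use the bias identity $\E_{\sI_k}\nuj_k = \nabla f(\xj_k) + \ej$. The only difference is cosmetic: the paper smooths the one-step increment by $\E_{\sI_k}$ before applying Lemma \ref{lem:geom}, whereas you apply Lemma \ref{lem:geom} to the raw increments conditionally on the full sequence $(\sI_k)_{k\ge 0}$ and only at the end evaluate $\E\la\ej,\nuj_{N_j}\ra$ via the tower property; by Fubini these two orderings are equivalent, and your bookkeeping about $N_j$ being independent of the infinite mini-batch sequence is exactly the point the paper also makes explicit.
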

\begin{proof}
  Let $\Mj_{k} = \la \ej, \xj_{k} - \xj_{0}\ra$. By definition, we have
\[\E_{\Nj} \la\ej, \td{x}_{j} - \td{x}_{j-1}\ra = \E_{\Nj}\Mj_{\Nj}.\]
Since $\Nj$ is independent of $(\xj_{0}, \ej)$, this implies that 
\[\E \la\ej, \td{x}_{j} - \td{x}_{j-1}\ra = \E \Mj_{\Nj}.\]
Also we have $\Mj_{0} = 0$. On the other hand,
\begin{align*}
\E_{\sI_{k}}\lb\Mj_{k+1} - \Mj_{k}\rb &= \E_{\sI_{k}}\la \ej, \xj_{k+1} - \xj_{k}\ra = -\etaj\la\ej, \E_{\sI_{k}}\nuj_{k}\ra\\ 
& = -\etaj\la\ej, \nabla f(\xj_{k})\ra - \etaj\|\ej\|^{2}.
\end{align*}
Using the same notation $\Ej$ as in the proof of Lemma \ref{lem:primal} and Lemma \ref{lem:dual}, we have
\begin{equation}\label{eq:Ij1}
\Ej\lb\Mj_{k+1} - \Mj_{k}\rb = -\etaj\la\ej, \Ej\nabla f(\xj_{k})\ra - \etaj\|\ej\|^{2}.
\end{equation}
Let $k = \Nj$ in \eqref{eq:Ij1}. By taking an expectation with respect to $\Nj$ and using Lemma \ref{lem:geom} and \ref{lem:geom_finite}, we obtain that 
\[\frac{\bj}{\Bj}\E_{\Nj}\Mj_{\Nj} = -\etaj\la\ej, \E_{\Nj}\Ej\nabla f(\xj_{\Nj})\ra - \etaj\|\ej\|^{2}.\]
The lemma is then proved by substituting $\xj_{\Nj} (\xj_{0})$ by $\td{x}_{j} (\td{x}_{j-1})$ and taking a further expectation with respect to the past randomness.
\end{proof}

\begin{proof}[\textbf{Theorem \ref{thm:one_epoch}}]
Multiplying equation \eqref{eq:primal} by $2$, equation \eqref{eq:dual} by $\frac{\bj}{\etaj\Bj}$ and summing them, we obtain that
\begin{align}
&2\etaj\Bj(1 - \etaj L - \frac{\bj}{\Bj}) \E\|\nabla f(\td{x}_{j})\|^{2} + \frac{\bj^{3} - \etaj^{2}L^{2}\bj\Bj - \etaj^{3}L^{3}\Bj^{2}}{\bj\etaj\Bj}\E \|\td{x}_{j} - \td{x}_{j-1}\|^{2}  \nonumber\\
& + 2\etaj\Bj \E \la\ej, \nabla f(\td{x}_{j})\ra + 2\bj\E \la\ej, \td{x}_{j} - \td{x}_{j-1}\ra \nonumber\\
 \le & - 2\bj\E \la\nabla f(\td{x}_{j}), \td{x}_{j} - \td{x}_{j-1}\ra + 2\bj\E (f(\td{x}_{j-1}) - f(\td{x}_{j}))  + \lb 2L\etaj^{2}\Bj + 2\etaj\bj\rb\E \|\ej\|^{2}.\label{eq:final1}
\end{align}
By Lemma \ref{lem:Ij}, the second row can be simplified as
\[2\etaj\Bj \E \la\ej, \nabla f(\td{x}_{j})\ra + 2\bj\E \la\ej, \td{x}_{j} - \td{x}_{j-1}\ra = -2\etaj\Bj\E \|\ej\|^{2}.\]
Using the fact that $2\la a, b\ra\le \beta\|a\|^{2} + \frac{1}{\beta}\|b\|^{2}$ for any $\beta> 0$, we have
\begin{align*}
&- 2\bj\E \la\nabla f(\td{x}_{j}), \td{x}_{j} - \td{x}_{j-1}\ra \\
  \le &\frac{\bj\etaj\Bj}{\bj^{3} - \etaj^{2}L^{2}\bj\Bj - \etaj^{3}L^{3}\Bj^{2}}\cdot \bj^{2}\E \|\nabla f(\td{x}_{j})\|^{2} + \frac{\bj^{3} - \etaj^{2}L^{2}\bj\Bj - \etaj^{3}L^{3}\Bj^{2}}{\bj\etaj\Bj}\E\|\td{x}_{j} - \td{x}_{j-1}\|^{2}.
\end{align*}
Putting the pieces together, we conclude that 
\begin{align}
  &\frac{\etaj\Bj}{\bj}\lb 2 - \frac{2\bj}{\Bj} - 2\etaj L - \frac{\bj^{3}}{\bj^{3} - \etaj^{2}L^{2}\bj\Bj - \etaj^{3}L^{3}\Bj^{2}}\rb\E \|\nabla f(\td{x}_{j})\|^{2}\nonumber\\
\le & 2\E (f(\td{x}_{j-1}) - f(\td{x}_{j}))  + \frac{2\etaj\Bj}{\bj}\lb 1 + \etaj L + \frac{\bj}{\Bj}\rb\E \|\ej\|^{2}.\label{eq:final2}
\end{align}
Since $\etaj L = \thetaj = \gamma (\bj / \Bj)^{\frac{2}{3}}$ and $\bj \ge 1, \Bj\ge 8\bj\ge 8$,
\[\bj^{3} - \etaj^{2}L^{2}\bj\Bj - \etaj^{3}L^{3}\Bj^{2} = b_j^{3}\lb 1 - \gamma^{2}\cdot \bj^{-\frac{2}{3}}\Bj^{-\frac{1}{3}} - \gamma^{3}\cdot \bj^{-1}\rb\ge \bj^{3}(1 - \gamma^{2}/2 - \gamma^{3}).\]
Then \eqref{eq:final2} can be simplified as
\begin{align}
  &\gamma\lb\frac{\Bj}{\bj}\rb^{\frac{1}{3}}\lb 2 - \frac{2\bj}{\Bj} - 2\gamma \lb\frac{\bj}{\Bj}\rb^{\frac{2}{3}} - \frac{1}{1 - \gamma^{2} / 2 - \gamma^{3}}\rb\E \|\nabla f(\td{x}_{j})\|^{2}\nonumber\\
  \le & 2L\E (f(\td{x}_{j-1}) - f(\td{x}_{j}))  + 2\gamma\lb 1 + \gamma \lb\frac{\bj}{\Bj}\rb^{\frac{2}{3}}+ \frac{\bj}{\Bj}\rb \lb\frac{\Bj}{\bj}\rb^{\frac{1}{3}}\E \|\ej\|^{2}\nonumber\\
  \le & 2L\E (f(\td{x}_{j-1}) - f(\td{x}_{j}))  + 2\gamma\lb 1 + \gamma \lb\frac{\bj}{\Bj}\rb^{\frac{2}{3}}+ \frac{\bj}{\Bj}\rb \frac{I(\Bj < n)}{\bj^{\frac{1}{3}}\Bj^{\frac{2}{3}}}\cdot\H^{*}\,\, (\mbox{By Lemma \ref{lem:ejnorm}})\label{eq:final3}.
\end{align}
Since $\Bj \ge 8\bj, \gamma \le \frac{1}{3}$, we have
\[2 - \frac{2\bj}{\Bj} - 2\gamma \lb\frac{\bj}{\Bj}\rb^{\frac{2}{3}} - \frac{1}{1 - \gamma^{2} / 2 - \gamma^{3}}\ge 2 - \frac{1}{4} - \frac{\gamma}{2} - \frac{1}{1 - \gamma^{2} / 2 - \gamma^{3}}\ge 0.482\]
and 
\[1 + \gamma \lb\frac{\bj}{\Bj}\rb^{\frac{2}{3}}+ \frac{\bj}{\Bj}\le 1 + \frac{\gamma}{4} + \frac{1}{8}\le 1.209.\]
Thus, \eqref{eq:final3} implies that 
\begin{equation}
  \label{eq:finalfinal}
  \lb\frac{\Bj}{\bj}\rb^{\frac{1}{3}}\E \|\nabla f(\td{x}_{j})\|^{2}\le \frac{5L}{\gamma}\cdot \E (f(\td{x}_{j-1}) - f(\td{x}_{j})) + \frac{6I(\Bj < n)}{\bj^{\frac{1}{3}}\Bj^{\frac{2}{3}}}\cdot\H^{*}.
\end{equation}

\end{proof}

\begin{proof}[\textbf{of Lemma \ref{lem:geom_finite}}]
We prove the first two claims by induction. The first inequality in the proof of Lemma \ref{lem:primal} yields 
\begin{align*}
&\etaj(1 - \etaj L) \|\nabla f(\xj_{k})\|^{2} \\
\le &f(\xj_{k}) - \E_{\sI_{k}}f(\xj_{k+1}) - \etaj \la\ej, \nabla f(\xj_{k})\ra  + \frac{L^{3}\etaj^{2}}{2\bj}\|\xj_{k} - \xj_{0}\|^{2} + L\etaj^{2}\|\ej\|^{2}.
\end{align*}
Using the fact that $2\la a, b\ra = \|a\|^{2} / c + c\|b\|^{2} - \|a - cb\|^{2} / c$ for any $c > 0$, we have
\[- \etaj \la\ej, \nabla f(\xj_{k})\ra\le \frac{1}{4}\etaj\|\nabla f(\xj_{k})\|^{2} + \etaj\|\ej\|^{2}.\]
Since $\etaj L \le 1 / 4\Bj^{2/3}\le 1/ 4$, we obtain that
\begin{align}
  \etaj \|\nabla f(\xj_{k})\|^{2}&\le 2\lb f(\xj_{k}) - \E_{\sI_{k}}f(\xj_{k+1})\rb   + \frac{L^{3}\etaj^{2}}{\bj}\|\xj_{k} - \xj_{0}\|^{2} + 2(L\etaj^{2} + \etaj)\|\ej\|^{2}\nonumber\\
& \le 2\lb f(\xj_{k}) - \E_{\sI_{k}}f(\xj_{k+1})\rb   + \frac{L^{3}\etaj^{2}}{\bj}\|\xj_{k} - \xj_{0}\|^{2} + 3\etaj\|\ej\|^{2}\label{eq:geom_finite_1}.
\end{align}
On the other hand, the first inequality in the proof of Lemma \ref{lem:dual} implies that 
  \begin{align*}
    & \E_{\sI_{k}}\|\xj_{k+1} - \xj_{0}\|^{2}\\ 
 \le & \lb 1 + \frac{\etaj^{2}L^{2}}{\bj}\rb\|\xj_{k} - \xj_{0}\|^{2} - 2\etaj\la\nabla f(\xj_{k}), \xj_{k} - \xj_{0}\ra - 2\etaj\la \ej, \xj_{k} - \xj_{0}\ra \\
& \quad + 2\etaj^{2}\|\nabla f(\xj_{k})\|^{2} + 2\etaj^{2}\|\ej\|^{2}.
\end{align*}
Using the fact that $2\la a, b\ra = \|a\|^{2} / c + c\|b\|^{2} - \|a - cb\|^{2} / c$ for any $c > 0$, we have
\[-2\etaj\la\nabla f(\xj_{k}), \xj_{k} - \xj_{0}\ra \le 8\etaj^{2}\Bj \|\nabla f(\xj_{k})\|^{2} + \frac{1}{8\Bj}\|\xj_{k} - \xj_{0}\|^{2},\]
and 
\[-2\etaj\la\ej, \xj_{k} - \xj_{0}\ra \le 8\etaj^{2}\Bj \|\ej\|^{2} + \frac{1}{8\Bj}\|\xj_{k} - \xj_{0}\|^{2}.\]
Thus, 
\begin{align}
    & \E_{\sI_{k}}\|\xj_{k+1} - \xj_{0}\|^{2}\nonumber\\ 
 \le & \lb 1 + \frac{\etaj^{2}L^{2}}{\bj} + \frac{1}{4\Bj}\rb\|\xj_{k} - \xj_{0}\|^{2} + (2\etaj^{2} + 8\etaj^{2}\Bj)\|\nabla f(\xj_{k})\|^{2} + (2\etaj^{2} + 8\etaj^{2}\Bj)\|\ej\|^{2}\nonumber\\
\le & \lb 1 + \frac{5}{16\Bj}\rb\|\xj_{k} - \xj_{0}\|^{2} + 10\etaj^{2}\Bj\|\nabla f(\xj_{k})\|^{2} + 10\etaj^{2}\Bj\|\ej\|^{2}, \label{eq:geom_finite_2}
\end{align}
where the last line uses the condition that $\etaj L \le 1 / 4$. Plugging \eqref{eq:geom_finite_1} into \eqref{eq:geom_finite_2}, we obtain that
\begin{align}
    & \E_{\sI_{k}}\|\xj_{k+1} - \xj_{0}\|^{2}\le  \lb 1 + \frac{5}{16\Bj}\rb\|\xj_{k} - \xj_{0}\|^{2} + 10\etaj^{2}\Bj\|\ej\|^{2}\nonumber\\
& \quad + 20\etaj \Bj (f(\xj_{k}) - \E_{\sI_{k}}f(\xj_{k+1})) + \frac{10(\etaj L)^{3}\Bj}{\bj}\|\xj_{k} - \xj_{0}\|^{2} + 40\etaj^{2}\Bj \|\ej\|^{2}\nonumber\\
& \le \lb 1 + \frac{5}{16\Bj} + \frac{10}{64\Bj}\rb\|\xj_{k} - \xj_{0}\|^{2} + 20\etaj \Bj (f(\xj_{k}) - \E_{\sI_{k}}f(\xj_{k+1})) + 40\etaj^{2}\Bj \|\ej\|^{2}\nonumber\\
& \le \lb 1 + \frac{1}{2\Bj}\rb \|\xj_{k} - \xj_{0}\|^{2} + 20\etaj \Bj (f(\xj_{k}) - \E_{\sI_{k}}f(\xj_{k+1})) + 40\etaj^{2}\Bj \|\ej\|^{2}.\label{eq:geom_finite_3}
\end{align}
Let 
\[\Lj_{k} = 20\etaj \Bj \E (f(\xj_{k}) - f^{*}) + \E \|\xj_{k} - \xj_{0}\|^{2}.\]
Taking expectation in \eqref{eq:geom_finite_3} then entails that
\begin{align*}
  &\Lj_{k+1} \le \lb 1 + \frac{1}{2\Bj}\rb \Lj_{k} + 40\etaj^{2}\Bj \E\|\ej\|^{2}\\
\Longleftrightarrow &\Lj_{k+1} + 80\etaj^{2}\Bj^{2}\E\|\ej\|^{2} \le \lb 1 + \frac{1}{2\Bj}\rb \lb \Lj_{k} + 80\etaj^{2}\Bj^{2}\E\|\ej\|^{2}\rb\\
\Longrightarrow & \Lj_{k}\le \lb 1 + \frac{1}{2\Bj}\rb^{k}\lb\Lj_{0} + 80\etaj^{2}\Bj^{2}\E\|\ej\|^{2}\rb.
\end{align*}
Recall that $N_{j}\sim \mathrm{Geom}\lb B_{j}/(B_{j} + b_{j})\rb$, i.e.
\[P(N_{j} = k) = \frac{b_{j}}{B_{j} + b_{j}}\lb\frac{B_{j}}{B_{j} + b_{j}}\rb^{k}\le \lb\frac{B_{j}}{B_{j} + 1}\rb^{k}.\]
Then
\[\E \lb 1 + \frac{1}{2B_{j}}\rb^{N_{j}} \le \sum_{k\ge 0}\lb\frac{2B_{j} + 1}{2B_{j}}\frac{B_{j}}{B_{j} + 1}\rb^{k} = \sum_{k\ge 0}\lb\frac{2B_{j} + 1}{2B_{j} + 2}\rb^{k} = 2B_{j} + 2\]
This together with the induction hypothesis entail that 
\[\E \Lj_{\Nj} \le (2B_{j} + 2)\lb \Lj_{0} + 80\etaj^{2}\Bj^{2}\E\|\ej\|^{2}\rb.\]
By Lemma \ref{lem:ejnorm}, $\E \|\ej\|^{2} < \infty$. Further by the induction hypothesis, we prove that
\[\E \Lj_{N_{j}} < \infty\]
and hence the first two claims.

~\\
\noindent To other claims are simple consequences of the first two. In fact, the third claim is followed by \eqref{eq:geom_finite_1} and the last two claims followed by the first two claims and the fact that $\la a, b\ra\le \|a\|^{2} / 2 + \|b\|^{2} / 2$.
\end{proof}

\section{Convergence Analysis for Smooth Objectives}\label{app:ana_smooth}
\begin{proof}[\textbf{Theorem \ref{thm:smooth}}]
Since $\td{x}_{T}^{*}$ is a random element from $(\td{x}_{j})_{j=1}^{T}$ with 
\[P(\td{x}_{T}^{*} = \td{x}_{j}) \propto \frac{\etaj\Bj}{\bj} \propto \lb \Bj/\bj\rb^{\frac{1}{3}},\]
we have
\begin{align*}
\E \|\nabla f(\td{x}_{T}^{*})\|^{2}&\le \frac{\frac{5L}{\gamma}\cdot \E (f(\td{x}_{0}) - f(\td{x}_{T+1})) + 6\lb\sum_{j=1}^{T}\bj^{-\frac{1}{3}}\Bj^{-\frac{2}{3}}I(\Bj < n)\rb\H^{*}}{\sum_{j=1}^{T}\bj^{-\frac{1}{3}}\Bj^{\frac{1}{3}}}\\
&\le \frac{\frac{5L}{\gamma}\cdot (f(\td{x}_{0}) - f^{*}) + 6\lb\sum_{j=1}^{T}\bj^{-\frac{1}{3}}\Bj^{-\frac{2}{3}}I(\Bj < n)\rb\H^{*}}{\sum_{j=1}^{T}\bj^{-\frac{1}{3}}\Bj^{\frac{1}{3}}}.
\end{align*}
\end{proof}

\begin{proof}[\textbf{Corollary \ref{cor:smooth_constant}}]
By Theorem \ref{thm:smooth},
\[\E \|\nabla f(\td{x}_{T}^{*})\|^{2}\le \frac{30\Delta_{f} + 6 TB^{-\frac{2}{3}}I(B < n)\cdot \H^{*}}{TB^{\frac{1}{3}}} = \frac{30\Delta_{f}}{TB^{\frac{1}{3}}} + \frac{6\H^{*}\cdot I(B < n)}{B}.\]
Let $\td{T}(\eps)$ be the minimum number of epochs such that 
\[\frac{30\Delta_{f}}{\td{T}(\eps)B^{\frac{1}{3}}}\le \frac{\eps}{2}.\]
Then under the setting of the corollary, for any $T\ge \td{T}(\eps)$,
\[\E \|\nabla f(\td{x}_{T}^{*})\|^{2}\le \frac{\eps}{2} + \frac{6\H^{*}\cdot I(B < n)}{B}\le \frac{\eps}{2}\le \eps.\]
By definition, we know that $T(\eps)\le \td{T}(\eps)$. Noticing that 
\[\td{T}(\eps) = O\lb\left\lceil\frac{\Delta_{f}}{\eps B^{\frac{1}{3}}}\right\rceil\rb = O\lb 1 + \frac{\Delta_{f}}{\eps B^{\frac{1}{3}}}\rb,\]
we conclude that 
\[\E \comp(\eps) = O(T(\eps)B) = O(\td{T}(\eps)B) = O\lb B + \frac{\Delta_{f}}{\eps}\cdot B^{\frac{2}{3}}\rb.\]
The corollary is then proved by substituting for $B$.
\end{proof}

\begin{proof}[\textbf{Corollary \ref{cor:smooth_vary}}]
By Theorem \ref{thm:smooth},
\[\E \|\nabla f(\td{x}_{T}^{*})\|^{2}\le \frac{30\Delta_{f} + 6 \sum_{j=1}^{T}\frac{I(\Bj < n)}{j}\H^{*}}{\sum_{j=1}^{T}(j^{\frac{1}{2}}\wedge n^{\frac{1}{3}})}\triangleq W(T).\]  
Let $T_{*} = \lfloor n^{\frac{2}{3}} \rfloor$. First we prove that $W(T)$ is strictly decreasing. 
\begin{enumerate}
\item When $T \ge T_{*}$, the numerator is a constant and the denominator is strictly increasing. Thus, $W(T)$ is strictly decreasing on $[T_{*}, \infty)$;
\item When $T < T_{*}$, let $a_{1j} = \frac{6\H^{*}}{j}$ and $a_{2j} = j^{\frac{1}{2}}$. Further let 
\[U(T) = \frac{30\Delta_{f}}{\sum_{j=1}^{T}a_{2j}}, \quad V(T) = \frac{\sum_{j=1}^{T}a_{1j}}{\sum_{j=1}^{T}a_{2j}},\]
then 
\[W(T) = U(T) + V(T).\]
It is obvious that $U(T)$ is strictly decreasing. Noticing that $\frac{a_{1j}}{a_{2j}} = \frac{6\H^{*}}{j^{\frac{3}{2}}}$ is strictly decreasing, we also conclude that $V(T)$ is strictly decreasing. Therefore, $W(T)$ is strictly decreasing on $[1, T_{*}]$.
\end{enumerate}
In summary, $W(T)$ is strictly decreasing. % Now we show that for any $T\ge T(\eps)$, 
% \[W(T)\le \eps,\]
% which implies that $\E\|\nabla f(\td{x}_{T})\|^{2}\le\eps$. 

Now we derive the bound for $T(\eps)$. To do so, we distinguish two cases to analyze $W(T)$. 
\begin{enumerate}
\item If $T \le T_{*}$, then 
\[W(T) = \frac{30\Delta_{f} + 6 \lb\sum_{j=1}^{T}\frac{1}{j}\rb\H^{*}}{\sum_{j=1}^{T}j^{\frac{1}{2}}}.\]
Since $\frac{1}{j}$ is decreasing, we have
\[\sum_{j=1}^{T}\frac{1}{j} = 1 + \sum_{j=2}^{T}\frac{1}{j}\le 1 + \int_{1}^{T}\frac{dx}{x} = 1 + \log T.\]
Similarly, since $j^{\frac{1}{2}}$ is increasing, we have
\[\sum_{j=1}^{T}j^{\frac{1}{2}}\ge \int_{0}^{T}x^{\frac{1}{2}}dx = \frac{2}{3}T^{\frac{3}{2}}.\]
Therefore,
\[W(T)\le \frac{45\Delta_{f} + 9 (1 + \log T)\H^{*}}{ T^{\frac{3}{2}}}.\]
\item If $T > T_{*}$, then
\begin{equation}\label{eq:WT_case2}
W(T) = \frac{30\Delta_{f} + 6 \lb\sum_{j=1}^{T_{*}}\frac{1}{j}\rb\H^{*}}{\sum_{j=1}^{T_{*}}j^{\frac{1}{2}} + n^{\frac{1}{3}}(T - T_{*})} = W(T_{*})\cdot \frac{\sum_{j=1}^{T_{*}}j^{\frac{1}{2}}}{\sum_{j=1}^{T_{*}}j^{\frac{1}{2}} + n^{\frac{1}{3}}(T - T_{*})}.
\end{equation}
Similar to the first case, we have
\[\sum_{j=1}^{T_{*}}j^{\frac{1}{2}} = \sum_{j=1}^{T_{*}-1}j^{\frac{1}{2}} + T_{*}^{\frac{1}{2}}\le \int_{1}^{T_{*}}\sqrt{x}dx + n^{\frac{1}{3}} = \frac{2}{3}n + n^{\frac{1}{3}} - \frac{2}{3}.\]
Since 
\[n^{\frac{1}{3}} - \frac{2}{3} = \frac{n - 1}{n^{\frac{2}{3}} + n^{\frac{1}{3}} + 1} + \frac{1}{3}\le \frac{n}{3},\]
we obtain that 
\[\sum_{j=1}^{T_{*}}j^{\frac{1}{2}}\le n.\]
As a result, \eqref{eq:WT_case2} implies that 
\[W(T)\le W(T_{*})\cdot \frac{1}{1 + n^{-\frac{2}{3}}(T - T_{*})}.\]
\end{enumerate}

Putting the pieces together, we obtain that 
\begin{equation}\label{eq:WT_bound}
W(T)\le \left\{
    \begin{array}{lll}
\displaystyle\frac{45\Delta_{f} + 9 (1 + \log T)\H^{*}}{T^{\frac{3}{2}}}& \triangleq W_{1}(T)& (T \le T_{*})\\
  \displaystyle \frac{W(T_{*})}{1 + n^{-\frac{2}{3}}(T - T_{*})} & \triangleq W_{2}(T)& (T > T_{*}).
    \end{array}
\right.
\end{equation}
It is easy to see that both $W_{1}(T)$ and $W_{2}(T)$ are strictly decreasing and $\lim_{T\rightarrow \infty}W_{1}(T) = \lim_{T\rightarrow \infty}W_{2}(T) = 0$. Let
\[T_{1}(\eps) = \min\{T: W_{1}(T)\le \eps\}, \quad T_{2}(\eps) = \min\{T\ge T_{*}: W_{2}(T)\le \eps\}.\]
Recall that $W(T)$ is also strictly decreasing, we have 
\[T(\eps)\le \left\{
    \begin{array}{cc}
      T_{1}(\eps) & (W(T_{*}) \le \eps)\\
      T_{2}(\eps) & (W(T_{*}) > \eps).
    \end{array}
\right.\]
More concisely, 
\begin{equation}
  \label{eq:Teps}
  T(\eps)\le T_{1}(\eps)\wedge T_{*} + (T_{2}(\eps) - T_{*}).
\end{equation}
To derive a bound for $T_{1}(\eps)$, let $\td{T}_{1}(\eps)$ be the minimum $T$ such that 
\[\frac{45\Delta_{f}}{T^{\frac{3}{2}}}\le \frac{\eps}{2}, \quad \frac{9\H^{*}(1 + \log T)}{T^{\frac{3}{2}}}\le \frac{\eps}{2}.\]
Then by Lemma \ref{lem:log}, we have
\[T_{1}(\eps)\le \td{T}_{1}(\eps) = O\lb\lb\frac{\Delta_{f}}{\eps}\rb^{\frac{2}{3}} + \lb\frac{\H^{*}}{\eps}\rb^{\frac{2}{3}}\cdot \log^{2} \lb\frac{\H^{*}}{\eps}\rb\rb.\]
On the other hand, it is straightforward to derive a bound for  $T_{2}(\eps)$ as 
\[T_{2}(\eps) -  T_{*} \le \lb n^{\frac{2}{3}}\cdot \frac{W(T_{*}) - \eps}{\eps}\rb_{+}\le n^{\frac{2}{3}}\cdot \frac{W(T_{*})}{\eps} = O\lb \frac{\Delta_{f}}{\eps n^{\frac{1}{3}}} + \frac{\H^{*}\log n}{\eps n^{\frac{1}{3}}}\rb.\]
Therefore, we conclude that
\begin{equation}\label{eq:Tepsfinal}
T(\eps) = O\lb \min\left\{\frac{1}{\eps^{\frac{2}{3}}}\left[\Delta_{f}^{\frac{2}{3}} + (\H^{*})^{\frac{2}{3}}\log^{2} \lb\frac{\H^{*}}{\eps}\rb\right], n^{\frac{2}{3}}\right\} + \frac{\Delta_{f} + \H^{*}\log n}{\eps n^{\frac{1}{3}}}\rb.
\end{equation}

Finally, to obtain the bound for the computation complexity, we notice that 
\[\sum_{j=1}^{T}j^{\frac{3}{2}} = O(T^{\frac{5}{2}}).\]
Let $x_{+}$ denote the positive part of $x$; i.e., $x_{+} = \max\{x, 0\}$. Therefore, 
\begin{align*}
\E \comp(\eps) &= O\lb\sum_{j=1}^{T(\eps)}B_{j}\rb = O\lb (T(\eps)\wedge T_{*})^{\frac{5}{2}} + n(T(\eps) - T_{*})_{+}\rb\\
& = O\lb (T_{1}(\eps)\wedge T_{*})^{\frac{5}{2}} + n(T_{2}(\eps) - T_{*})\rb\\
& = O\lb \min\left\{\frac{1}{\eps^{\frac{5}{3}}}\left[\Delta_{f}^{\frac{5}{3}} + (\H^{*})^{\frac{5}{3}}\log^{5} \lb\frac{\H^{*}}{\eps}\rb\right], n^{\frac{5}{3}}\right\} + \frac{n^{\frac{2}{3}}}{\eps}\cdot (\Delta_{f} + \H^{*}\log n)\rb.
\end{align*}
\end{proof}

\begin{remark}\label{rem:smooth_log}
The log-factor $\log^{5}\lb\frac{1}{\eps}\rb$ can be reduced to $\log^{\frac{3}{2} + \mu}\lb\frac{1}{\eps}\rb$ for any $\mu > 0$ by setting $\Bj =\lceil j^{\frac{3}{2}}(\log j)^{\frac{3}{2} + \mu}\wedge n\rceil$. In this case,
\[W(T) = \frac{30\Delta_{f} + 6\lb\sum_{j=1}^{T}\frac{I(\Bj < n)}{j(\log j)^{1 + \frac{2\mu}{3}}}\rb\H^{*}}{\sum_{j=1}^{T}j^{\frac{1}{2}}(\log j)^{\frac{1}{2} + \frac{\mu}{3}}}.\]
For any $\mu > 0$, 
\[\sum_{j=1}^{T}\frac{I(\Bj < n)}{j(\log j)^{1 + \frac{2\mu}{3}}}\le 1 + \int_{1}^{\infty}\frac{1}{x(\log x)^{1 + \frac{2\mu}{3}}} < \infty.\]
On the other hand, as proved above,
\[\sum_{j=1}^{T}j^{\frac{1}{2}}(\log j)^{\frac{1}{2} + \frac{\mu}{3}}\ge \sum_{j=1}^{T}j^{\frac{1}{2}}\sim T^{\frac{3}{2}}.\]
Thus, 
\[W(T)\sim O\lb\frac{\Delta_{f} + \H^{*}}{T^{\frac{3}{2}}}\rb.\]
Using similar arguments and treating $\Delta_{f}$ and $\H^{*}$ as $O(1)$ for simplicity, we can obtain that 
\[T(\eps) = O\lb\eps^{-\frac{2}{3}}\wedge n^{\frac{2}{3}} + \frac{1}{\eps n^{\frac{1}{3}}}\rb.\]
If $B_{T(\eps)} < n$, then 
\[\E\comp(\eps) = O\lb\sum_{j=1}^{T(\eps)}j^{\frac{3}{2}}(\log j)^{\frac{3}{2} + \mu}\rb = O\lb T(\eps)^{\frac{5}{2}}\cdot (\log T(\eps))^{\frac{3}{2} + \mu}\rb = O\lb\eps^{-\frac{5}{3}}\log^{\frac{3}{2} + \mu}\lb\frac{1}{\eps}\rb\rb.\]
If $B_{T(\eps)} \ge n$, we obtain the same bound as in Corollary \ref{cor:smooth_vary}.
\end{remark}

\section{Convergence Analysis for P-L Objectives}\label{app:ana_PL}
\begin{proof}[\textbf{Theorem \ref{thm:PL}}]
By equation \eqref{eq:finalfinal} in the proof of Theorem \ref{thm:smooth} (see p.~\pageref{eq:finalfinal}) and the P-L condition,
\begin{align*}
&\mu \lb\frac{\Bj}{\bj}\rb^{\frac{1}{3}}\E(f(\td{x}_{j}) - f^{*}) \le \lb\frac{\Bj}{\bj}\rb^{\frac{1}{3}}\E \|\nabla f(\td{x}_{j})\|^{2}\\
\le & \frac{5L}{\gamma}\cdot \E (f(\td{x}_{j-1}) - f(\td{x}_{j})) + 6\bj^{-\frac{1}{3}}\Bj^{-\frac{2}{3}}I(\Bj < n)\cdot\H^{*}.  
\end{align*}
For brevity, we write $F_{j}$ for $\E (f(\td{x}_{j}) - f^{*})$. Then 
\begin{equation}\label{eq:PL1}
\lb\mu \gamma\Bj^{\frac{1}{3}} + 5L\bj^{\frac{1}{3}}\rb F_{j}\le 5L\bj^{\frac{1}{3}} F_{j-1} + 6\gamma\Bj^{-\frac{2}{3}}I(\Bj < n)\cdot\H^{*}.
\end{equation}
By definition of $\lambda_{j}$, this can be reformulated as
\[F_{j}\le \lambda_{j}F_{j-1} + 6\gamma\H^{*}\cdot \frac{I(\Bj < n)}{\mu\gamma \Bj + 5L\bj^{\frac{1}{3}}\Bj^{\frac{2}{3}}}.\]
Applying the above inequality iteratively for $j= T, T-1, \ldots, 1$, we prove the result.
\end{proof}

\begin{proof}[\textbf{Corollary \ref{cor:PL_constant}}]
When $\Bj \equiv B, \bj \equiv 1$ and $\gamma = \frac{1}{6}$, \eqref{eq:PL1} in the proof of Theorem \ref{thm:PL} can be reformulated as 
\[\lb\mu\gamma B^{\frac{1}{3}} + 30L\rb \lb F_{j} - \frac{6\H^{*}I( B  < n)}{\mu B }\rb \le 30L \lb F_{j-1} - \frac{6\H^{*}I( B  < n)}{\mu B }\rb.\]
This implies that 
\[F_{T}\le \lb\frac{30L}{\mu B^{\frac{1}{3}} + 30L}\rb^{T}\Delta_{f} + \frac{6\H^{*}I(B < n)}{\mu B}.\]
Under the setting of this problem, 
\[\frac{6\H^{*}I(B < n)}{\mu B}\le \frac{\eps}{2}.\]
By definition of $T(\eps)$, we have
\[T(\eps)\le \log \frac{\Delta_{f}}{\eps} \bigg/\log \lb\frac{30L}{\mu B^{\frac{1}{3}} + 30L}\rb = O\lb \log \frac{\Delta_{f}}{\eps}\lb 1 + \frac{L}{\mu B^{\frac{1}{3}}}\rb\rb.\]
As a consequence, 
\[\E \comp(\eps) = O\lb T(\eps) B\rb = O\lb \lb B + \frac{L B^{\frac{2}{3}}}{\mu}\rb\log \frac{\Delta_{f}}{\eps}\rb.\]
Plugging into $B$, we end up with
\[\E \comp(\eps) = O\lb \left\{\lb\frac{\H^{*}}{\mu \eps}\wedge n\rb + \frac{1}{\mu}\lb\frac{\H^{*}}{\mu \eps}\wedge n\rb^{\frac{2}{3}}\right\}\log \frac{\Delta_{f}}{\eps}\rb\]
\end{proof}

\end{document}